\def \r {\frak r}
\def \q {\frak q}
\newtheorem{df}{Definition}[section]
\newtheorem{thm}{Theorem}[section]
\newtheorem{cor}{Corollary}[section]
\newtheorem{rem}{Remark}[section]
\newtheorem{prop}{Proposition}[section]
\newtheorem{exa}{Example}[section]
\newtheorem{lem}{Lemma}[section]
\numberwithin{equation}{section}
\begin{document}

\date{}
\title{Superalgebras  with homogeneous structures of Lie type}
\author{Sami Mabrouk and Othmen Ncib }

\address{University of Gafsa, Faculty of Sciences Gafsa, 2112 Gafsa, Tunisia}

\email{mabrouksami00@yahoo.fr, sami.mabrouk@fsgf.u-gafsa.tn}

\address{University of Gafsa, Faculty of Sciences Gafsa, 2112 Gafsa, Tunisia}

\email{othmenncib@yahoo.fr, othmen.ncib@fsgf.u-gafsa.tn}

\date{\today}

 \maketitle{}

{\bf\begin{center}{Abstract} \end{center}}In this paper, we extend the concept of Lie superalgebras to a more generalized framework called Super-Lie superalgebras. In addition, they seem to be exploring various super-generalizations of other algebraic structures, such as Super-associative, left (right) Super-Leibniz, and  Super-left(right)-symmetric superalgebras, then we give some examples and related fundamental results. The notion of Rota-Baxter operators with any parity on the Super-Lie superalgebras is given. Moreover, we study a representations of Super-Lie superalgebras and its associate dual representations. The notion of derivations of Super-Lie superalgebras is introduced thus we show that the converse of a bijective derivation defines a Rota-Baxter operator. Finally, we give a generalization of the Super-Lie superalgebras and some other structures in the ternary case which we supported this with some examples and interesting results. 
\vspace{0.5cm}

\noindent\textbf{Keywords:}  Lie superalgebra, Super-Lie superalgebra, (Super)-left-symmetric superalgebra, Rota-Baxter operator, representation,  derivation, $3$-super-Lie superalgebra.

\noindent{\textbf{Mathematics Subject Classification (2020):}}  	17B05, 	17B10, 	17B70, 17B38, 17A40.
\tableofcontents

\renewcommand{\thefootnote}{\fnsymbol{footnote}}
\footnote[0]{ Corresponding author (Sami Mabrouk): mabrouksami00@yahoo.fr, sami.mabrouk@fsgf.u-gafsa.tn}

 \section{Introduction}

Graded Lie algebras are a topic of interest in physics in the context of "supersymmetries", relating particles of differing statistics. In mathematics, graded Lie algebras have been known fore same time in the context of deformation theory.
A superalgebra is a $\mathbb Z_2$-graded vector space $\mathfrak g=\mathfrak g_{\overline{0}}\oplus\mathfrak g_{\overline{1}}$ equipped with an even bilinear map (i.e. if $x\in\mathfrak g_i,\;y\in\mathfrak g_j;\;i,j\in\mathbb Z_2=\{\overline{0},\overline{1}\}$, then $x\cdot y\in\mathfrak g_{i+j}$). A Lie superalgebra is a superalgebra $(\mathfrak g,[\cdot,\cdot])$ where
\begin{align*}
    [x,y]&=-(-1)^{ij}[y,x],\;\;\text{for}\; x\in\mathfrak g_i,\;y\in\mathfrak g_j,\\
    [x,[y,z]]&=[[x,y],z]+(-1)^{ij}[y,[x,z]],\;\;\text{for}\; x\in\mathfrak g_i,\;y\in\mathfrak g_j.
\end{align*}

In \cite{Berfzin-Katz}, the authors studies the theory of Lie superalgebras as Lie algebras of certain generalized groups,
nowadays called Lie supergroups, whose function algebras are algebras with commuting and anticommuting variables. Recently, a satisfactory theory, similar to Lie’s theory, has been developed on the connection between Lie supergroups and Lie superalgebras \cite{Berfzin-Leites}. 

Another class of non-associative superalgebras called
Lie antialgebras is introduced by V. Ovsienko in \cite{Ovsienko}.  The axioms were established after encountering two “unusual”
algebraic structures in a context of symplectic geometry: the algebras, named $\text{asl}_2$ 
and the conformal antialgebra $\mathcal{AK}(1)$. Ovsienko found that both of them are related
to an odd bivector fields on $\mathbb R^{
2|1}$
invariant under the action of the orthosymplectic
Lie superalgebra $\mathfrak{osp}(1|2)$. The algebra $\mathcal{AK}(1)$ is also related to the famous NeveuSchwartz Lie superalgebra (see \cite{LecomteOvsienko, Morier-GenoudOvsienko} for more details).

The study of Rota-
Baxter algebra appeared for the first time in the work of Baxter \cite{Baxter1} in $1960$
and were then intensively studied by Atkinson \cite{Atkinson1}, Miller \cite{Miller1}, Rota \cite{Rota1}, Cartier
\cite{Cartier1}, and more recently, they reappeared in the work of Guo \cite{Guo1} and Ebrahimi-Fard
\cite{Ebrahimi1}. A few years later, the Rota-Baxter operators were studied in the super case of some algebraic structures. We are interested in the Rota-Baxter operators of weight $\lambda\in\mathbb K$ on Lie superalgebras which is introduced in \cite{Abdaoui-Mabrouk-Makhlouf} and defined as an even  linear map $\mathfrak R:\mathfrak g\to\mathfrak g$ satisfying for any homogeneous elements $x,y\in \mathfrak g$, the identity
\begin{equation}\label{R-B-Lie-Sup}
[\mathfrak R(x),\mathfrak R(y)]=\mathfrak R\Big([\mathfrak R(x),y]+[x,\mathfrak R(y)]+\lambda[x,y]\Big).    
\end{equation}

Pre-Lie algebras (called also left-symmetric algebras, Vinberg algebras, quasi-associative
algebras) are a class of a natural algebraic systems appearing in many
fields in mathematics and mathematical physics. They were first mentioned by Cayley
in $1890$ \cite{Cayley}. They play an important
role in the study of symplectic and complex structures on Lie groups and Lie algebras
\cite{Andrada-Salamon,Chu,Dardié-Medina1,Dardié-Medina2,Lichnerowicz-Medina}, phases spaces of Lie algebras \cite{Bai1, Kupershmidt},
classical and quantum Yang–Baxter equations \cite{Diata-Medina} etc. Recently, the $\mathbb Z_2$-graded version of pre-Lie algebras called pre-Lie superalgebras are defined as a linear superspace $\mathfrak g$ equipped with an even bilinear map $"\circ"$ satisfying for any homogeneous elements $x,y\in\mathfrak g$, the identity 
\begin{equation}\label{iden-pre-L}
  \mathfrak{ass}(x,y,z)=(-1)^{|x||y|}\mathfrak{ass}(y,x,z),  
\end{equation}
where $\mathfrak{ass}(x,y,z)=x\circ (y\circ z)-(x\circ y)\circ z$, also appeared in many others fields (see \cite{Chapoton-Livernet,Gerstenhaber,Mikhalev-Mikhalev} for more details). They were first introduced by Gerstenhaber
in $1963$ to study the cohomology structure of associative algebras \cite{Gerstenhaber}.

When we look for the relationships between   Lie superalgebras and  pre-Lie superalgebras  structures we find that we can construct pre-Lie superalgebras from  Lie superalgebras uses Rota–Baxter operators. Let $\mathfrak R$ be a Rota-Baxter operator   on a Lie superalgebra $(\mathfrak g,[\cdot,\cdot])$, then, the product 
\begin{equation}\label{pre-Lie-from-Lie}
    x\circ y=[\mathfrak R(x),y],\;\forall x,y\in\mathcal{H}(\mathfrak g),
\end{equation}
defines a pre-Lie superalgebra.
The reader notices that the Rota-Baxter operator is always considered as an even linear map and he has the right to ask the following question: why is the operator $\mathfrak R$ not considered odd? In \cite{Bai-Guo-Zhang}, the authors answer this question and define the homogeneous Rota-Baxter operators   with parity as the data in the following:

 A Rota-Baxter operator on     Lie-superalgebra $(\mathfrak g,[\cdot,\cdot])$ is a linear map $\mathfrak R:\mathfrak g\to\mathfrak g$  defined by  
 
 \begin{equation}\label{G-R-B-Lie-Sup}
[\mathfrak R(x),\mathfrak R(y)]=\mathfrak R\Big((-1)^{|\mathfrak R|(|x|+|\mathfrak R|)}[\mathfrak R(x),y]+[x,\mathfrak R(y)]\Big),\;\forall x,y\in\mathcal{H}(\mathfrak g).    
\end{equation}
 
Motiveted by this works, we introduce the notion of      superalgebra with homogenous structures of Lie type. We study  this phenomenon for other algebraic structures (associative superalgebras, pre-Lie superalgebras etc) and their applications and associated operators such as representations, derivations and  Rota-Baxter operators. The ternary  extending superalgebra with homogenous structures of Lie type is studied.
\subsection{Layout of the paper} Here is an outline of the paper. In Section \ref{Sec2}, we introduce a Super-generalization of some superalgebraic structures that we are interested in, and to give some examples and related results which gives value and importance to this new structures. In Section \ref{Sec3}, we study a generalization of Rota-Baxter operators on  Super-Lie superalgebras, which requires us to provide to introduce the right and left Rota-Baxter operators of Super-Lie superalgebras which allows us to obtain a relation between Super-Lie superalgebras structures and its associated Super-left(right)-symmetric superalgebras structures via left (right) Rota-Baxter operators. We also give some interesting examples. Representation theoy of Super-Lie superalgebras and its associated dual representation are introduced in Section \ref{Sec4}, then  we  characterize it by semi-direct product. In what follows allowing us to  introduce the notions of right and left derivations of Super-Lie superalgebras and we show that, the converse
of an invertible left (resp. right) derivation define a right (resp. left) Rota-Baxter operator. We finished this section  by  a generalization of the Super-Lie superalgebras and some other structures in the ternary case which we supported this with some examples and interesting results. Section \ref{Sec5} is devoted to   give a generalization of the Super-Lie superalgebras and some other structures in the ternary case which we supported this with some examples and interesting results. The aim of Section \ref{Sec6} is to open the horizons by asking and discussing some important questions about certain notions related to these new developments, the most important of which are concerning deformations and cohomology, Maurer-Cartan characterization, $\mathcal{O}$-operators etc. 
\subsection{Conventions and notations}
\begin{enumerate}
    \item 
Throughout this paper, $\mathbb{K}$ denotes an algebraically closed field of characteristic zero, and all vector spaces are over $\mathbb{K}$ and finite-dimensional.

\item A vector space $V$ is said to be  $\mathbb{Z}_2$-graded if we are given a family $(V_i)_{i\in\mathbb{Z}_2}$ of vector subspaces of $V$ such that $V=V_{\overline{0}}\oplus V_{\overline{1}}.$
\item The symbol $\overline{x}$ always implies that $x$ is a
$\mathbb{Z}_2$-homogeneous element and it is their $\mathbb{Z}_2$-degree. We denote by $\mathcal{H}(V)$ the set of all homogeneous elements of $V$ and $\mathcal{H}(V^n)$ refers to the set of tuples with homogeneous elements. 
\item We refer by $\overline{F}$ to the parity of a multilinear map
$F : V^1\times\dots\times V^n \rightarrow W$   by
$$F(V_{i_1}^1,\dots,V_{i_n}^1)\subset W_{i_1+\cdots+i_n+\overline{F}}.$$

\item Let $End(V )$ be the $\mathbb{Z}_{2}$-graded vector space of endomorphisms of a $\mathbb{Z}_{2}$-graded vector space $V  .$ The graded binary commutator $[f,g] = f \circ g - (-1)^{\overline{f}\overline{g}}g \circ f$ induces a structure of Lie superalgebra in $End(V )$.
\end{enumerate}

\section{Super-Lie superalgebras and Admissibilty}
\label{Sec2}
In this section, we introduce the Super-generalization of some algebraic structures such as Super-associative superalgebras, left(right) Super-Leibniz superalgebras, Super-Lie superalgebras, Super-left(right)-symmetric superalgebras and we give some examples and related results of these structures. All structures studied  throughout this work are considered homogeneous.

\subsection{Definitions and Examples}

\begin{df}\label{def-graded-ass-super-alg}
A {\it\bf{Super-associative superalgebra}} is a couple $(\mathfrak{g},\mu)$ consisting of a $\mathbb{Z}_2$-graded vector space $\mathfrak{g}$ and a bilinear map $\mu:\mathfrak{g}\times\mathfrak{g}\to\mathfrak{g}$ satisfying
\begin{equation}\label{graded-super-associativity}
\mathfrak{ass}_\mu(x,y,z)=0,\;\forall x,y,z\in\mathfrak{g},\; (\mathbb{Z}_2\text{-graded associativity condition})
\end{equation}
where $\mathfrak{ass}_\mu(x,y,z)=\mu(\mu(x,y),z)-\mu(x,\mu(y,z))$. If, in addition $\mu(x,y)=(-1)^{(\overline{x}+\overline{\mu})(\overline{y}+\overline{\mu})}\mu(y,x),\;\forall x,y\in\mathcal{H}(\mathfrak{g})$, the super-associative superalgebra is called supercommutative.
\end{df}
\begin{rem}
If $\overline{\mu}=0$, we recover the classical associative superalgebras structures also called even associative superalgebras. If $\overline{\mu}=1$, we call it   odd associative superalgebras.     
\end{rem}
\begin{exa}\label{ex-n-com-assoc-sup}
Let $\mathfrak g=\mathfrak g_{\overline{0}}\oplus\mathfrak g_{\overline{1}}$ where $\mathfrak g_{\overline{0}}=<e_1>$ and $\mathfrak g_{\overline{1}}=<e_2>$ be a two dimentional $\mathbb Z_2$-graded vector space. Let use define the bilinear map $\mu:\mathfrak g\times\mathfrak g\to\mathfrak g$ on the elements of the basis of $\mathfrak g$ by
$$\mu(e_1,e_2)=-\mu(e_2,e_1)=e_1,\;\;\mu(e_2,e_2)=e_2.$$
Then $(\mathfrak g,\mu)$ is a super-noncommutative odd associative superalgebra.
\end{exa}
\begin{exa}\label{ex-com-assoc-sup}
Let $\mathfrak g=<e_1,e_2\;|\;e_3>$ be a three dimentional $\mathbb Z_2$-graded vector space. Then, the odd bilinear map $\mu:\mathfrak g\times\mathfrak g\to\mathfrak g$ defined by
$$\mu(e_1,e_2)=-\mu(e_2,e_1)= e_3,$$
  defines on $\mathfrak g$ a supercommutative odd associative superalgebra structure.
\end{exa}
    \begin{df}
A {\it\bf{left Super-Leibniz superalgebra}} is a couple $(\mathfrak{g},[\cdot,\cdot])$ where, $\mathfrak g$ is a $\mathbb{Z}_2$-graded vector space and $[\cdot,\cdot]$ be a bilinear map satisfying the following identity
\begin{equation}\label{Left-Leibniz-identity}
 [x,[y,z]]=[[x,y],z]+(-1)^{(\overline{x}+\overline{[\cdot,\cdot]})(\overline{y}+\overline{[\cdot,\cdot]})}[y,[x,z]],   
\end{equation}
for any homogeneous elements $x,y,z\in\mathfrak g$. 
The identity \eqref{Left-Leibniz-identity} is called {\it\bf{left Super-Leibniz identity}}.
\end{df}
Let us fixed an element $x\in\mathcal{H}(\mathfrak g)$. Define the linear map $ad^L_x:\mathfrak g\to\mathfrak g$ by
\begin{equation}\label{left-adj}
ad^L_x(y)=[x,y],\;\forall y\in\mathcal{H}(\mathfrak g).    
\end{equation}
Then, $\overline{ad^L_x}=\overline{x}+\overline{[\cdot,\cdot]}$ and the left Super-Leibniz identity \eqref{Left-Leibniz-identity} can be written as follow
\begin{equation}\label{Left-adj-Leibniz-identity}
ad^L_x[y,z]=[ad^L_x(y),z]+(-1)^{(\overline{x}+\overline{[\cdot,\cdot]})(\overline{y}+\overline{[\cdot,\cdot]})}[y,ad^L_x(z)].
\end{equation}
 \begin{df}
A {\it\bf{right Super-Leibniz superalgebra}} is a $\mathbb{Z}_2$-graded vector space equipped with a bilinear map $[\cdot,\cdot]$ such that, for any homogeneous elements $x,y,z\in\mathfrak g$, the following identity hold
\begin{equation}\label{right-Leibniz-identity}
 [[x,y],z]=[x,[y,z]]+(-1)^{(\overline{y}+\overline{[\cdot,\cdot]})(\overline{z}+\overline{[\cdot,\cdot]})}[[x,z],y].   
\end{equation} 
The identity \eqref{right-Leibniz-identity} is called {\it\bf{right Super-Leibniz identity}}.
\end{df}

For any fixed homogeneous element $x$ of $\mathfrak g$, we define the linear map $ad^R_x:\mathfrak g\to\mathfrak g$ by
\begin{equation}\label{right-adj}
ad^R_x(y)=[y,x],\;\forall y\in\mathcal{H}(\mathfrak g).    
\end{equation}
Then, $\overline{ad^R_x}=\overline{x}+\overline{[\cdot,\cdot]}$ and the right Super-Leibniz identity \eqref{right-Leibniz-identity} can be written as follow
\begin{equation}\label{right-adj-Leibniz-identity}
ad^R_z[x,y]=[x,ad^R_z(y)]+(-1)^{(\overline{y}+\overline{[\cdot,\cdot]})(\overline{z}+\overline{[\cdot,\cdot]})}[ad^R_z(x),y].
\end{equation}
\begin{df}\label{def-graded-Lie-super-alg}
 A  \textbf{Super-Lie superalgebra} is a $\mathbb{Z}_2$-graded vector space $\mathfrak{g}$ equipped with a bilinear product $[\cdot,\cdot]:\mathfrak{g}\times\mathfrak{g}\to\mathfrak{g}$ satisfying
 \begin{align}
& [x,y]=-(-1)^{(\overline{x}+\overline{[\cdot,\cdot]})(|y|+\overline{[\cdot,\cdot]})}[y,x]\;\;\;\;\;\;\;\;(\mathbb{Z}_2\text{-graded super-skewsymmetry}),\label{cond-graded-sup-ant}\\
& \displaystyle\circlearrowleft_{x,y,z}(-1)^{(\overline{x}+\overline{[,\cdot,\cdot]})(\overline{z}+\overline{[,\cdot,\cdot]})}[x,[y,z]]=0
\;\;(\mathbb{Z}_2\text{-graded super-Jacobi identity})\label{graded-Jac-id}
 \end{align}
 for any $x,y,z\in\mathcal{H}(\mathfrak{g})$. Such Super-Lie superalgebra is denoted by $(\mathfrak{g},[\cdot,\cdot])$.
\end{df}
\begin{rem}
If $\overline{[\cdot,\cdot]}=0$, we recover the (even) Lie superalgebra structure and if $\overline{[\cdot,\cdot]}=1$, we recall it an odd Lie superalgebra.    
A Super-Lie superalgebra is just a left Super-Leibniz (resp. right Super-Leibniz) superalgebra $(\mathfrak{g},[\cdot,\cdot])$ in which, the product $[\cdot,\cdot]$ is $\mathbb{Z}_2$-graded super-skewsymmetric.
\end{rem}

\begin{exa}
 Let $\mathfrak g=<e_1,e_2,e_3>$ be a three dimentional $\mathbb Z_2$-graded vector space where $\mathfrak g_{\overline{0}}=<e_1>$ and $\mathfrak g_{\overline{1}}=<e_2,e_3>$. Define the bilinear product  $[\cdot,\cdot]:\mathfrak g\times\mathfrak g\to\mathfrak g$ by
 $$[e_2,e_3]=e_2,$$
 and the other products are zeros. Then $(\mathfrak g,[\cdot,\cdot])$ is an odd Lie superalgebra.
\end{exa}
\subsection{Super-Lie admissible superalgebras}
\begin{df}
A \textbf{Super-Lie admissible superalgebra} is a $\mathbb{Z}_2$-graded vector space $\mathfrak g$ equipped with a bilinear map $\mu:\mathfrak g\times\mathfrak g\to\mathfrak g$ such that, the bracket defined for all homogeneous elements $x,y\in\mathfrak g$ by 
\begin{equation}\label{braket-Lie-admiss}
 [x,y]=\mu(x,y)-(-1)^{(\overline{x}+\overline{\mu})(\overline{y}+\overline{\mu})}\mu(y,x),   
\end{equation}
satisfies the $\mathbb{Z}_2$-graded super-Jacobi identity \eqref{graded-Jac-id}.
\end{df}

\begin{df}
Let $\mathfrak g$ be a  $\mathbb{Z}_2$-graded vector space. Then, $\mathfrak g$ is called:
\begin{enumerate}
\item  {\it\bf{Super-left-symmetric superalgebra}} if it is equipped with a product $\vartriangleright$ satisfying
\begin{equation}\label{left-sym-cond}
\mathfrak{ass}_\vartriangleright(x,y,z)=(-1)^{(\overline{x}+\overline{\vartriangleright})(\overline{y}+\overline{\vartriangleright})}\mathfrak{ass}_\vartriangleright(y,x,z),\;\forall x,y,z\in\mathcal{H}(\mathfrak g).    
\end{equation}
    \item  {\it\bf{Super-right-symmetric superalgebra}} if it is equipped with a product $\vartriangleleft$ satisfying
\begin{equation}\label{right-sym-cond}
\mathfrak{ass}_\vartriangleleft(x,y,z)=(-1)^{(\overline{y}+\overline{\vartriangleleft})(\overline{z}+\overline{\vartriangleleft})}\mathfrak{ass}_\vartriangleleft(x,z,y),\;\forall x,y,z\in\mathcal{H}(\mathfrak g).    
\end{equation}

\end{enumerate}
\end{df}
\begin{rem}\label{assoc-both-left-right}
    Any Super-associative superalgebra is a Super-left-symmetric (resp. Super-right-symmetric) superalgebra. 
\end{rem}
\begin{thm}\label{left-sym-to-Lie-super}
Any Super-left-symmetric (resp. Super-right-symmetric) superalgebra is a Super-Lie admissible superalgebra.
\end{thm}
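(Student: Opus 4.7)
The plan is to prove the left-symmetric case by direct expansion; the right-symmetric case is parallel. Write $\omega := \overline{\vartriangleright}$ so that the induced bracket \eqref{braket-Lie-admiss} has parity $\omega = \overline{[\cdot,\cdot]}$ and, in particular, $\overline{[y,z]} = \overline{y}+\overline{z}+\omega$. It is useful to adopt the shifted parity $\widetilde{x} := \overline{x}+\omega$: under this shift, the super-left-symmetry sign $(-1)^{(\overline{x}+\omega)(\overline{y}+\omega)}$, the super-skew-symmetry sign appearing in \eqref{cond-graded-sup-ant}, and the super-Jacobi sign $(-1)^{(\overline{x}+\omega)(\overline{z}+\omega)}$ all collapse to $(-1)^{\widetilde{x}\widetilde{y}}$ and $(-1)^{\widetilde{x}\widetilde{z}}$; and one checks immediately that $\widetilde{[y,z]} \equiv \widetilde{y}+\widetilde{z} \pmod{2}$, so the combinatorics match the classical graded setting.

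Fix homogeneous $x,y,z$. Expanding $[x,[y,z]]$ via \eqref{braket-Lie-admiss} yields four signed triple products, each of the form $u\vartriangleright(v\vartriangleright w)$ or $(u\vartriangleright v)\vartriangleright w$. Forming the cyclic sum $\circlearrowleft_{x,y,z}(-1)^{\widetilde{x}\widetilde{z}}\,[x,[y,z]]$ therefore produces twelve such terms. The next step is to regroup these twelve terms into six signed super-associators $\mathfrak{ass}_\vartriangleright(\sigma(x),\sigma(y),\sigma(z))$, one for each permutation $\sigma\in S_3$, by matching each product $u\vartriangleright(v\vartriangleright w)$ with its companion $(u\vartriangleright v)\vartriangleright w$ carrying the opposite sign.

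Finally, the super-left-symmetry identity \eqref{left-sym-cond}, rewritten as $\mathfrak{ass}_\vartriangleright(u,v,w) - (-1)^{\widetilde{u}\widetilde{v}}\mathfrak{ass}_\vartriangleright(v,u,w) = 0$, pairs the six associators into three cancelling pairs, namely $\{(x,y,z),(y,x,z)\}$, $\{(y,z,x),(z,y,x)\}$, and $\{(z,x,y),(x,z,y)\}$. Each pair annihilates, so the cyclic sum vanishes, which is precisely the $\mathbb{Z}_2$-graded super-Jacobi identity \eqref{graded-Jac-id}.

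The main obstacle is the sign bookkeeping, which comes from three layers: the cyclic-sum coefficient, the skew-symmetry of the outer bracket (whose second slot carries parity $\overline{y}+\overline{z}+\omega$), and the skew-symmetry of the inner bracket. Passing to the shifted parity $\widetilde{\cdot}$ is the device that collapses all of these into the classical left-symmetric pattern, turning the verification into a routine check. The Super-right-symmetric case is handled identically, the only modification being that the three cancelling pairs of associators now differ by a transposition of the last two slots and are collapsed by \eqref{right-sym-cond} in place of \eqref{left-sym-cond}.
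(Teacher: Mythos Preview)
Your proof is correct and follows essentially the same route as the paper: expand the Jacobi expression for the commutator bracket into twelve triple products, regroup these into six signed associators $\mathfrak{ass}_\vartriangleright$, and then cancel them in three pairs using the super-left-symmetry identity. The only differences are cosmetic: the paper works with the Leibniz form $[x,[y,z]]-[[x,y],z]-(-1)^{(\overline{x}+\overline{\vartriangleright})(\overline{y}+\overline{\vartriangleright})}[y,[x,z]]$ rather than the cyclic sum, and it tracks all signs in the original parities, whereas your shifted-parity device $\widetilde{x}=\overline{x}+\omega$ collapses the bookkeeping to the classical graded pattern and makes the computation cleaner.
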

\begin{proof}
Let $(\mathfrak{g},\vartriangleright)$ be a Super-left-symmetric superalgebra
Then, it is easy to see that, $[\cdot,\cdot]$ is $\mathbb{Z}_2$-graded super-skewsymmetric. It remains to show that $\mathbb{Z}_2$-graded super-Jacobi identity is satisfied.
Let $x,y,z\in\mathcal{H}(\mathfrak{g})$, we have
\begin{align*}
[x,[y,z]]&=x\vartriangleright[y,z])-(-1)^{(\overline{x}+\overline{\vartriangleright})(\overline{[y,z]}+\overline{\vartriangleright})}[y,z]\vartriangleright x)\\&=x\vartriangleright(y\vartriangleright z)-(-1)^{(\overline{y}+\overline{\vartriangleright})(\overline{z}+\overline{\vartriangleright})}x\vartriangleright(z\vartriangleright y)-(-1)^{(\overline{x}+\overline{\vartriangleright})(\overline{y}+\overline{z}+2\overline{\vartriangleright})}(y\vartriangleright z)\vartriangleright x\\&+(-1)^{(\overline{x}+\overline{\vartriangleright})(\overline{y}+\overline{z}+2\overline{\vartriangleright})}(-1)^{(\overline{y}+\overline{\vartriangleright})(\overline{z}+\overline{\vartriangleright})}(z\vartriangleright y)\vartriangleright x\\&=x\vartriangleright(y\vartriangleright z)-(-1)^{(\overline{y}+\overline{\vartriangleright})(\overline{z}+\overline{\vartriangleright})}x\vartriangleright(z \vartriangleright y)-(-1)^{(\overline{x}+\overline{\vartriangleright})(\overline{y}+\overline{z})}(y\vartriangleright z)\vartriangleright x\\&+(-1)^{(\overline{x}+\overline{\vartriangleright})(\overline{y}+\overline{z})}(-1)^{(\overline{y}+\overline{\vartriangleright})(\overline{z}+\overline{\vartriangleright})}(z\vartriangleright y)\vartriangleright x. 
\end{align*}
Similarly, we have:
\begin{align*}
 [[x,y],z]&=(x\vartriangleright y)\vartriangleright z-(-1)^{(\overline{x}+\overline{\vartriangleright})(\overline{y}+\overline{\vartriangleright})}(y\vartriangleright x)\vartriangleright z-(-1)^{(\overline{z}+\overline{\vartriangleright})(\overline{x}+\overline{z})}z\vartriangleright(x\vartriangleright y)\\&+(-1)^{(\overline{z}+\overline{\vartriangleright})(\overline{x}+\overline{y})}(-1)^{(\overline{x}+\overline{\vartriangleright})(\overline{y}+\overline{\vartriangleright})}z\vartriangleright(y\vartriangleright x)  
\end{align*}
and
\begin{align*}
 [y,[x,z]_\mu]_\mu&=y\vartriangleright(x\vartriangleright z)-(-1)^{(\overline{x}+\overline{\vartriangleright})(\overline{z}+\overline{\vartriangleright})}y\vartriangleright(z\vartriangleright x)-(-1)^{(\overline{y}+\overline{\vartriangleright})(\overline{x}+\overline{z})}(x\vartriangleright z)\vartriangleright y\\&+(-1)^{(\overline{y}+\overline{\vartriangleright})(\overline{x}+\overline{z})}(-1)^{(\overline{x}+\overline{\vartriangleright})(\overline{z}+\overline{\vartriangleright})}(z\vartriangleright x)\vartriangleright y.   
\end{align*}
By, the fact that $(\mathfrak{g},\vartriangleright)$ is $\mathbb{Z}_2$-graded left-symmetric superalgebra and  a direct computation, we get 
\begin{align*}
R&=[x,[y,z]]-[[x,y],z]-(-1)^{(\overline{x}+\overline{[\cdot,\cdot]})(\overline{y}+\overline{[\cdot,\cdot]})}[y,[x,z]]\\&=\Big(x\vartriangleright(y\vartriangleright z)-(x\vartriangleright y)\vartriangleright z\Big) +(-1)^{(\overline{x}+\overline{\vartriangleright})(\overline{y}+\overline{\vartriangleright})}\Big((y\vartriangleright x)\vartriangleright z-y\vartriangleright(x\vartriangleright z)\Big)\\&+(-1)^{(\overline{y}+\overline{\vartriangleright})(\overline{z}+\overline{\vartriangleright})}\Big((x\vartriangleright z)\vartriangleright y-x\vartriangleright(z\vartriangleright y)\Big)-(-1)^{zv+\overline{\vartriangleright})(\overline{x}+\overline{y})}\Big((z\vartriangleright x)\vartriangleright y-z\vartriangleright(x\vartriangleright y)\Big)\\&-(-1)^{(\overline{x}+\overline{\vartriangleright})(\overline{y}+\overline{z})}\Big((y\vartriangleright z)\vartriangleright x-y\vartriangleright(z\vartriangleright x)\Big)+ (-1)^{(\overline{x}+\overline{\vartriangleright})(\overline{y}+\overline{z})}(-1)^{(\overline{y}+\overline{\vartriangleright})(\overline{z}+\overline{\vartriangleright})}\Big((z\vartriangleright y)\vartriangleright x-z\vartriangleright(y\vartriangleright x)\Big)\\&=-\mathfrak{ass}_\vartriangleright(x,y,z)+(-1)^{(\overline{x}+\overline{\vartriangleright})(\overline{y}+\overline{\vartriangleright})}\mathfrak{ass}_\vartriangleright(y,x,z)+(-1)^{(\overline{y}+\overline{\vartriangleright})(\overline{z}+\overline{\vartriangleright})}\Big(\mathfrak{ass}_\vartriangleright(x,z,y)-(-1)^{(\overline{x}+\overline{\vartriangleright})(\overline{z}+\overline{\vartriangleright})}\mathfrak{ass}_\vartriangleright(z,x,y)\Big)\\&-(-1)^{(\overline{x}+\overline{\vartriangleright})(\overline{y}+\overline{z})}\Big(\mathfrak{ass}_\vartriangleright(y,z,x)-(-1)^{(\overline{y}+\overline{\vartriangleright})(\overline{z}+\overline{\vartriangleright})}\mathfrak{ass}_\vartriangleright(z,y,x)\Big)\\&=0. 
\end{align*}
Then, $(\mathfrak g,\vartriangleright)$ is a Super-Lie admissible superalgebra. Similarly, we can show the same result for the Super-right-symmetric superalgebra.   
\end{proof}

By Theorem \ref{left-sym-to-Lie-super} and Remark \ref{assoc-both-left-right}, we can get the following result.
\begin{cor}\label{sup-Lie-from-sup-ass}
Any Super-associative superalgebra is a Super-Lie admissible superalgebra.
\end{cor}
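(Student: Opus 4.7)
The plan is to deduce the corollary directly by chaining the two immediately preceding results, so essentially no fresh calculation is required. Given a Super-associative superalgebra $(\mathfrak{g},\mu)$, the associator $\mathfrak{ass}_\mu$ vanishes identically by \eqref{graded-super-associativity}. Consequently both sides of the defining identities \eqref{left-sym-cond} and \eqref{right-sym-cond} collapse to $0 = (-1)^{\ast}\cdot 0$ no matter what sign factor appears in front, which is precisely the content of Remark \ref{assoc-both-left-right}: such $(\mathfrak{g},\mu)$ is simultaneously a Super-left-symmetric and a Super-right-symmetric superalgebra.

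Having secured this, the second step is to invoke Theorem \ref{left-sym-to-Lie-super}, which guarantees that every Super-left-symmetric superalgebra is Super-Lie admissible. Applying it to $(\mathfrak{g},\mu)$ — viewed as a Super-left-symmetric superalgebra via the preceding paragraph — immediately shows that the bracket $[x,y]=\mu(x,y)-(-1)^{(\overline{x}+\overline{\mu})(\overline{y}+\overline{\mu})}\mu(y,x)$ defined in \eqref{braket-Lie-admiss} satisfies the $\mathbb{Z}_2$-graded super-Jacobi identity \eqref{graded-Jac-id}, i.e. $(\mathfrak{g},\mu)$ is Super-Lie admissible.

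There is essentially no obstacle to overcome; the corollary is a two-line consequence of results already in place. The only point worth flagging for the reader is that the parity label $\overline{\mu}$ in the Super-associative setting plays exactly the same structural role as $\overline{\vartriangleright}$ in the Super-left-symmetric setting — both simply record the degree of the underlying bilinear map — so the sign factor appearing in \eqref{braket-Lie-admiss} is the very one whose manipulation drives the computation in the proof of Theorem \ref{left-sym-to-Lie-super}. Hence the conclusion transfers verbatim, and one may, if desired, append a one-line sentence noting that the Super-right-symmetric route yields the same outcome, providing an alternative but entirely symmetric derivation.
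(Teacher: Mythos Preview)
Your proposal is correct and follows exactly the same route as the paper: the corollary is stated immediately after the sentence ``By Theorem \ref{left-sym-to-Lie-super} and Remark \ref{assoc-both-left-right}, we can get the following result,'' and no further proof is given. Your two-step chain (associative $\Rightarrow$ super-left-symmetric via vanishing associator, then super-left-symmetric $\Rightarrow$ Super-Lie admissible via Theorem \ref{left-sym-to-Lie-super}) is precisely the intended argument.
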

\begin{exa}
Let us consider the Super-associative superalgebra $(\mathfrak g,\mu)$ given in Example \ref{ex-n-com-assoc-sup}. Then, by Corollary \ref{sup-Lie-from-sup-ass}, the odd bilinear map $[\cdot,\cdot]_\mu$ defined on the basis $\{e_1,e_2\}$ by 
$$[e_1,e_2]\mu=\mu(e_1,e_2)-(-1)^{\overline{\mu}(1+\overline{\mu})}\mu(e_2,e_1)=2e_1,$$
defines on $\mathfrak g$ an odd Lie superalgebra structure.
\end{exa}

\begin{prop}
    Let $(\mathfrak{g},\vartriangleright)$ be a Super-left-symmetric superalgebra. Then $(\mathfrak{g},\vartriangleleft)$ is a Super-right-symmetric superalgebra, where 
    \begin{equation}
     x\vartriangleleft y=(-1)^{(\overline{x}+\overline{\vartriangleright})(\overline{y}+\overline{\vartriangleright})}y\vartriangleright x,\;\forall x,y\in\mathcal{H}(\mathfrak{g}).
 \end{equation}
\end{prop}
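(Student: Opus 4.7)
The plan is to verify condition \eqref{right-sym-cond} directly by expanding $\mathfrak{ass}_\vartriangleleft$ in terms of $\vartriangleright$ via the defining relation $x \vartriangleleft y = (-1)^{(\overline{x}+\overline{\vartriangleright})(\overline{y}+\overline{\vartriangleright})} y \vartriangleright x$, and then applying the Super-left-symmetric identity \eqref{left-sym-cond} for $\vartriangleright$. The first observation I would record is that $\overline{\vartriangleleft} = \overline{\vartriangleright}$, which simplifies the sign comparison at the end.

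First I would compute $(x \vartriangleleft y) \vartriangleleft z$. Setting $a = x \vartriangleleft y$, its parity is $\overline{x}+\overline{y}+\overline{\vartriangleright}$, and so $\overline{a}+\overline{\vartriangleleft} = \overline{x}+\overline{y}$. Two applications of the definition give
\[
(x \vartriangleleft y) \vartriangleleft z = (-1)^{(\overline{x}+\overline{y})(\overline{z}+\overline{\vartriangleright})} (-1)^{(\overline{x}+\overline{\vartriangleright})(\overline{y}+\overline{\vartriangleright})} \, z \vartriangleright (y \vartriangleright x).
\]
Similarly,
\[
x \vartriangleleft (y \vartriangleleft z) = (-1)^{(\overline{x}+\overline{\vartriangleright})(\overline{y}+\overline{z}+\overline{\vartriangleright})} (-1)^{(\overline{y}+\overline{\vartriangleright})(\overline{z}+\overline{\vartriangleright})} \, (z \vartriangleright y) \vartriangleright x.
\]
At this stage the quickest route is to observe that both terms, after pulling out a common overall sign, recognize as $\mathfrak{ass}_\vartriangleright(z,y,x)$ up to a sign, namely
\[
\mathfrak{ass}_\vartriangleleft(x,y,z) = \varepsilon(x,y,z)\, \mathfrak{ass}_\vartriangleright(z,y,x),
\]
for an explicit Koszul sign $\varepsilon(x,y,z)$ that I would compute by collecting the exponents. (A short combination of the exponents shows that the two pre-factors coincide.)

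Next I would carry out the same expansion for $\mathfrak{ass}_\vartriangleleft(x,z,y)$ (swapping the roles of $y$ and $z$), obtaining
\[
\mathfrak{ass}_\vartriangleleft(x,z,y) = \varepsilon(x,z,y)\, \mathfrak{ass}_\vartriangleright(y,z,x).
\]
By \eqref{left-sym-cond} applied to $\vartriangleright$, we have $\mathfrak{ass}_\vartriangleright(z,y,x) = (-1)^{(\overline{z}+\overline{\vartriangleright})(\overline{y}+\overline{\vartriangleright})} \mathfrak{ass}_\vartriangleright(y,z,x)$. Substituting this into the expression for $\mathfrak{ass}_\vartriangleleft(x,y,z)$ and comparing with $\mathfrak{ass}_\vartriangleleft(x,z,y)$, condition \eqref{right-sym-cond} reduces to the identity of exponents
\[
\varepsilon(x,y,z) + (\overline{z}+\overline{\vartriangleright})(\overline{y}+\overline{\vartriangleright}) \equiv \varepsilon(x,z,y) + (\overline{y}+\overline{\vartriangleleft})(\overline{z}+\overline{\vartriangleleft}) \pmod{2},
\]
which, using $\overline{\vartriangleleft} = \overline{\vartriangleright}$, is a purely formal check in $\mathbb{Z}_2$.

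The main (and essentially only) obstacle is the Koszul-sign bookkeeping in the two expansions; the algebraic content is just the single application of the Super-left-symmetric identity for $\vartriangleright$ with the variables $y$ and $z$ interchanged. Once the exponents are tabulated carefully, everything collapses mod $2$, and $(\mathfrak g,\vartriangleleft)$ is seen to be a Super-right-symmetric superalgebra.
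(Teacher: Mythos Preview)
The paper states this proposition without proof, so there is no argument to compare against. Your strategy is the natural one and is correct: rewrite $\mathfrak{ass}_\vartriangleleft(x,y,z)$ as a Koszul sign times $\mathfrak{ass}_\vartriangleright(z,y,x)$, then invoke \eqref{left-sym-cond} with the first two arguments swapped.

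One small slip to flag: in your expression for $x\vartriangleleft(y\vartriangleleft z)$ the first exponent should be $(\overline{x}+\overline{\vartriangleright})(\overline{y}+\overline{z})$ rather than $(\overline{x}+\overline{\vartriangleright})(\overline{y}+\overline{z}+\overline{\vartriangleright})$, since $\overline{y\vartriangleleft z}+\overline{\vartriangleleft}=\overline{y}+\overline{z}+2\overline{\vartriangleright}\equiv\overline{y}+\overline{z}$. With this correction both prefactors do indeed agree (each reduces to $(-1)^{\overline{x}\,\overline{y}+\overline{x}\,\overline{z}+\overline{y}\,\overline{z}+\overline{\vartriangleright}}$), and the resulting $\varepsilon(x,y,z)$ is manifestly symmetric in $y$ and $z$. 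Hence your final congruence becomes $\varepsilon(x,y,z)\equiv\varepsilon(x,z,y)\pmod 2$, which is immediate, and \eqref{right-sym-cond} follows.
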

\begin{df}
A $\mathbb{Z}_2$-graded superalgebra $(\mathfrak g,\circ)$ is called  {\it\bf{Super-flexible superalgebra}}, if it satisfies:
\begin{equation}\label{right-sym-cond}
\mathfrak{ass}_\circ(x,y,z)=(-1)^{(\overline{x}+\overline{\circ})(\overline{y}+\overline{z})+(\overline{y}+\overline{\circ})(\overline{z}+\overline{\circ})}\mathfrak{ass}_\circ(z,y,x),\;\forall x,y,z\in\mathcal{H}(\mathfrak g).    
\end{equation}

\end{df}

\begin{prop}
If $\mathfrak g$ is both super-left and super-right-symmetric superalgebra, then it is a Super-flexible superalgebra. Moreover, any Super-flexible superalgebra is an admissible Super-Lie superalgebra.
    
\end{prop}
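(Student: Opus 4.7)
The plan is to handle the two assertions separately. For the first, I will transport $\mathfrak{ass}_\circ(x,y,z)$ to $\mathfrak{ass}_\circ(z,y,x)$ via three successive transpositions of adjacent arguments: apply super-right-symmetry to exchange $y$ and $z$, then super-left-symmetry to exchange the first two slots of $\mathfrak{ass}_\circ(x,z,y)$, and finally super-right-symmetry once more to exchange the last two slots of $\mathfrak{ass}_\circ(z,x,y)$. The three accumulated Koszul factors multiply to $(-1)^{(\overline{y}+\overline{\circ})(\overline{z}+\overline{\circ})+(\overline{x}+\overline{\circ})(\overline{z}+\overline{\circ})+(\overline{x}+\overline{\circ})(\overline{y}+\overline{\circ})}$; a short $\pmod 2$ manipulation based on the identity $(\overline{x}+\overline{\circ})(\overline{y}+\overline{z})\equiv(\overline{x}+\overline{\circ})(\overline{y}+\overline{\circ})+(\overline{x}+\overline{\circ})(\overline{z}+\overline{\circ})$ rewrites the product as $(-1)^{(\overline{x}+\overline{\circ})(\overline{y}+\overline{z})+(\overline{y}+\overline{\circ})(\overline{z}+\overline{\circ})}$, which is exactly the super-flexible sign.

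For the second assertion, I would define the commutator $[x,y]:=x\circ y-(-1)^{(\overline{x}+\overline{\circ})(\overline{y}+\overline{\circ})}\,y\circ x$, observe that super-skewsymmetry of $[\cdot,\cdot]$ is immediate from this definition, and verify the graded super-Jacobi identity, equivalently that the Jacobiator $J(x,y,z):=\circlearrowleft_{x,y,z}(-1)^{(\overline{x}+\overline{\circ})(\overline{z}+\overline{\circ})}[x,[y,z]]$ vanishes. Expanding each cyclic term yields four $\circ$-products, for a total of twelve terms, which regroup (as in the expansion carried out in the proof of Theorem \ref{left-sym-to-Lie-super}) into the six signed associators $\mathfrak{ass}_\circ(\sigma(x),\sigma(y),\sigma(z))$, $\sigma\in S_3$. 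Super-flexibility relates each associator to the one obtained by reversing the argument order via the transposition $(13)$, giving three natural pairs $\{\mathfrak{ass}_\circ(x,y,z),\mathfrak{ass}_\circ(z,y,x)\}$, $\{\mathfrak{ass}_\circ(y,z,x),\mathfrak{ass}_\circ(x,z,y)\}$, $\{\mathfrak{ass}_\circ(z,x,y),\mathfrak{ass}_\circ(y,x,z)\}$. A sign check shows that within each pair the Koszul prefactors of the two associators are opposite, so super-flexibility cancels the pair and $J=0$.

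The main obstacle will be the Koszul bookkeeping in the second step: the three cyclic contributions to $J$ each produce four $\circ$-monomials with compound sign factors of the form $(-1)^{(\overline{a}+\overline{\circ})(\overline{b}+\overline{c}+\overline{\circ})}$, and to extract the six signed associators one must regroup these carefully and verify that the prefactor of each associator is the negative of the super-flexibly-transported prefactor of its paired associator. Once this matching is confirmed pair by pair, the Jacobiator vanishes and the commutator defines a Super-Lie bracket, making $(\mathfrak{g},\circ)$ Super-Lie admissible.
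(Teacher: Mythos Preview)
Your proposal is correct. The paper states this proposition without proof, so there is no argument to compare against; your approach---transporting the associator by three adjacent transpositions for the first claim, and for the second claim reusing the six-associator decomposition established in the proof of Theorem~\ref{left-sym-to-Lie-super} and cancelling the three $(13)$-paired associators via super-flexibility---is the natural one and is fully consistent with the paper's methodology. The sign computations you outline all check out: in particular, for the pair $\{\mathfrak{ass}_\circ(y,x,z),\mathfrak{ass}_\circ(z,x,y)\}$ one uses $(\overline{x}+\overline{\circ})(\overline{y}+\overline{\circ})+(\overline{y}+\overline{\circ})(\overline{x}+\overline{z})\equiv(\overline{y}+\overline{\circ})(\overline{z}+\overline{\circ})\pmod 2$, and for the pair $\{\mathfrak{ass}_\circ(x,z,y),\mathfrak{ass}_\circ(y,z,x)\}$ the two factors $(\overline{y}+\overline{\circ})(\overline{z}+\overline{\circ})$ cancel, leaving exactly the prefactor $(-1)^{(\overline{x}+\overline{\circ})(\overline{y}+\overline{z})}$ needed.
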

\begin{rem}
Any Super-associative superalgebra is a Super-flexible superalgebra.  
\end{rem}
\section{Rota-Baxter of Super-Lie superalgebras}\label{Sec3}

In this section, we give an extension of the notion of  Rota-Baxter operators on the Super-Lie Superalgebras, for this we are obliged to introduce the notions of the left Rota-Baxter operators, and the right Rota-Baxter operators on Super-Lie superalgebras wich is a generalization of what given in \cite{Bai-Guo-Zhang}. Thus, we give the relation between Super-Lie superalgebras and Super-left(right)-symmetric superalgebras from the left (right) Rota-Baxter operators   on Super-Lie superalgebras. We also enriched it with some examples.

\begin{df}\label{def-left-R-B}
Let $(\mathfrak g,[\cdot,\cdot])$ be a Super-Lie superalgebra. A \textbf{left Rota-Baxter operator}   on $\mathfrak g$ is an homogeneous linear map $\mathfrak{R}:\mathfrak g\to\mathfrak g$ satisfying
\begin{equation}\label{lef-sym-R-B}
 [\mathfrak{R}(x),\mathfrak{R}(y)]=\mathfrak{R}\Big([\mathfrak{R}(x),y]+(-1)^{\overline{\mathfrak{R}}(\overline{y}+\overline{\mathfrak{R}}+\overline{[\cdot,\cdot]})}[x,\mathfrak{R}(y)] \Big),\;\forall x,y\in\mathcal{H}(\mathfrak g).   
\end{equation}
\end{df}
\begin{thm}\label{super-left-by-sup-Lie}
Let $\mathfrak{R}$ be left Rota-Baxter operator  on a Super-Lie superalgebra $(\mathfrak g,[\cdot,\cdot])$. Then, the binary operation defined by 
\begin{equation}\label{grad-Lie-superalg-to-left-symm}
 x\vartriangleright y=[\mathfrak{R}(x),y],\;\forall x,y\in\mathcal{H}(\mathfrak g),   
\end{equation}
defines on $\mathfrak g$ a Super-left-symmetric superalgebra structure.
\end{thm}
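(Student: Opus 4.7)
The plan is to verify that $\vartriangleright$ satisfies the Super-left-symmetry condition \eqref{left-sym-cond}. First I would compute the parity of the new product: since $x\vartriangleright y=[\mathfrak R(x),y]$, we have $\overline{\vartriangleright}=\overline{\mathfrak R}+\overline{[\cdot,\cdot]}$, a fact I will need to manage sign bookkeeping throughout. Then $\mathfrak{ass}_\vartriangleright(x,y,z)$ expands as
\begin{equation*}
[\mathfrak R(x),[\mathfrak R(y),z]]-[\mathfrak R\bigl([\mathfrak R(x),y]\bigr),z].
\end{equation*}

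Next I would apply the defining relation \eqref{lef-sym-R-B} to rewrite the term $\mathfrak R([\mathfrak R(x),y])$. Solving \eqref{lef-sym-R-B} for this term yields
\begin{equation*}
\mathfrak R\bigl([\mathfrak R(x),y]\bigr)=[\mathfrak R(x),\mathfrak R(y)]-(-1)^{\overline{\mathfrak R}(\overline y+\overline{\mathfrak R}+\overline{[\cdot,\cdot]})}\mathfrak R\bigl([x,\mathfrak R(y)]\bigr).
\end{equation*}
Substituting this into the associator, the term $[[\mathfrak R(x),\mathfrak R(y)],z]$ can be replaced using the graded super-Jacobi identity \eqref{graded-Jac-id} applied to the homogeneous elements $\mathfrak R(x),\mathfrak R(y),z$. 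This converts the nested-bracket structure into expressions of the form $[\mathfrak R(x),[\mathfrak R(y),z]]$ and $[\mathfrak R(y),[\mathfrak R(x),z]]$ (up to signs), which match the shape of $x\vartriangleright(y\vartriangleright z)$ and $y\vartriangleright(x\vartriangleright z)$.

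After substitution, I would organize the resulting expression so that the first cancellation takes place between $[\mathfrak R(x),[\mathfrak R(y),z]]$ and the already-present $[\mathfrak R(x),[\mathfrak R(y),z]]$. What remains should be a sum of two terms: one of the form $[\mathfrak R(y),[\mathfrak R(x),z]]$ (i.e.\ $y\vartriangleright(x\vartriangleright z)$) and one of the form $[\mathfrak R([x,\mathfrak R(y)]),\cdot]$-type, which after reapplying \eqref{lef-sym-R-B} to the $xy$-swapped version becomes $-(y\vartriangleright x)\vartriangleright z$. Collecting the scalars should yield exactly $(-1)^{(\overline x+\overline{\vartriangleright})(\overline y+\overline{\vartriangleright})}\mathfrak{ass}_\vartriangleright(y,x,z)$.

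The main obstacle is the sign accounting: three independent sources of signs enter (the super-Jacobi identity on $\mathfrak R(x),\mathfrak R(y),z$, the Rota–Baxter sign in \eqref{lef-sym-R-B}, and the target sign $(-1)^{(\overline x+\overline{\vartriangleright})(\overline y+\overline{\vartriangleright})}$). I will use the identity $\overline{\vartriangleright}=\overline{\mathfrak R}+\overline{[\cdot,\cdot]}$ to simplify exponents modulo $2$ and, where convenient, note that $\overline{\mathfrak R(x)}=\overline x+\overline{\mathfrak R}$ so the Jacobi signs can be compared directly with the left-symmetric target sign. Once the signs align, the identity follows from the super-Jacobi identity together with one application of the left Rota–Baxter relation.
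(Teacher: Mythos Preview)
Your approach is correct and essentially the same as the paper's: both arguments rest on one application of the left Rota--Baxter identity \eqref{lef-sym-R-B} and one application of the super-Jacobi (left Super-Leibniz) identity, after first recording $\overline{\vartriangleright}=\overline{\mathfrak R}+\overline{[\cdot,\cdot]}$. Two small points: (i) your expansion of $\mathfrak{ass}_\vartriangleright(x,y,z)$ has an overall sign flipped relative to the paper's convention $\mathfrak{ass}_\mu(x,y,z)=\mu(\mu(x,y),z)-\mu(x,\mu(y,z))$, which is harmless here since the target identity is symmetric in that sign; (ii) the leftover term $[\mathfrak R([x,\mathfrak R(y)]),z]$ becomes $-(y\vartriangleright x)\vartriangleright z$ (up to sign) by super-skewsymmetry of the bracket alone, not by a second use of \eqref{lef-sym-R-B}. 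The paper streamlines the bookkeeping by forming the difference $\mathfrak{ass}_\vartriangleright(x,y,z)-(-1)^{(\overline x+\overline{\vartriangleright})(\overline y+\overline{\vartriangleright})}\mathfrak{ass}_\vartriangleright(y,x,z)$ from the outset, so the Rota--Baxter identity applies directly to the combined $\mathfrak R$-terms without first isolating $\mathfrak R([\mathfrak R(x),y])$.
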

\begin{proof}
It is easy to see that Eq. \eqref{grad-Lie-superalg-to-left-symm} gives, $\overline{\vartriangleright}=\overline{\mathfrak{R}}+\overline{[\cdot,\cdot]}$.
Let $x,y,z\in\mathcal{H}(\mathfrak g)$. Then, by using Eqs. \eqref{Left-Leibniz-identity} and \eqref{lef-sym-R-B}, we have
\begin{align*}
\mathfrak{ass}_\vartriangleright(x,y,z)-(-1)^{(\overline{x}+\overline{\vartriangleright})(\overline{y}+\overline{\vartriangleright})}\mathfrak{ass}_\vartriangleright(y,x,z)&=(x\vartriangleright y)\vartriangleright z-x\vartriangleright(y\vartriangleright z)-(-1)^{(\overline{x}+\overline{\vartriangleright})(\overline{y}+\overline{\vartriangleright})}(y\vartriangleright x)\vartriangleright z\\&+(-1)^{(\overline{x}+\overline{\vartriangleright})(\overline{y}+\overline{\vartriangleright})}y\vartriangleright(x\vartriangleright z)\\&=[\mathfrak{R}[\mathfrak{R}(x),y],z]-(-1)^{(\overline{x}+\overline{\mathfrak{R}}+\overline{[\cdot,\cdot]})(\overline{y}+\overline{\mathfrak{R}}+\overline{[\cdot,\cdot]})}[\mathfrak{R}[\mathfrak{R}(y),x],z]\\&-[\mathfrak{R}(x),[\mathfrak{R}(y),z]]+(-1)^{(\overline{x}+\overline{\mathfrak{R}}+\overline{[\cdot,\cdot]})(\overline{y}+\overline{\mathfrak{R}}+\overline{[\cdot,\cdot]})}[\mathfrak{R}(y),[\mathfrak{R}(x),z]]\\&=\Big[\Big(\mathfrak{R}[\mathfrak{R}(x),y]+(-1)^{\overline{\mathfrak{R}}(\overline{y}+\overline{\mathfrak{R}}+\overline{[\cdot,\cdot]})}\mathfrak{R}[x,\mathfrak{R}(y)]\Big),z\Big]\\&-[[\mathfrak{R}(x),\mathfrak{R}(y)],z]\\&=[[\mathfrak{R}(x),\mathfrak{R}(y)],z]-[[\mathfrak{R}(x),\mathfrak{R}(y)],z]\\&=0.    
\end{align*}
Then, $(\mathfrak g,\vartriangleright)$ is a Super-left-symmetric superalgebra.
\end{proof}
\begin{exa}\label{exp-LRB}
Let us consider the orthosymplectic superalgebra $\mathfrak{osp}(1|2)=\mathfrak{osp}(1|2)_{\overline{0}}\oplus\mathfrak{osp}(1|2)_{\overline{1}}$ by the respective basis $\{Y,F,G\}$ of $\mathfrak{osp}(1|2)_{\overline{0}}$, $\{E_+,E_-\}$ of $\mathfrak{osp}(1|2)_{\overline{1}}$ and the even bilinear product define on the basis of $\mathfrak{osp}(1|2)$ by 
$$[Y,F]=F,\;\;[Y,G]=-G,\;\;[F,G]=2Y,$$
$$[Y,E_\pm]=\pm\frac{1}{2}E_\pm,\;\;[F,E_+]=[G,E_-]=0,$$
$$[F,E_-]=-E_+,\;\;\;[G,E_+]=-E_-,$$
$$[E_+,E_+]=F,\;\;[E_-,E_-]=-G,\;\;[E_+,E_-]=Y.$$
Then $(\mathfrak{osp}(1|2),[\cdot,\cdot])$ is an even Lie superalgebra (See \cite{G-P} for more details).

Now, let us define the odd linear map $\mathfrak{R}_L:\mathfrak{osp}(1|2)\to\mathfrak{osp}(1|2)$ on the elements of basis of $\mathfrak{osp}(1|2)$ by $$\mathfrak{R}_L(F)=E_-,\;\;\;\;\mathfrak{R}_L(E_+)=-\frac{1}{2}G,$$
and the other terms are zeros. Then $\mathfrak{R}$ is a left Rota-Baxter operator   on $\mathfrak{osp}(1|2)$. By using Theorem \eqref{super-left-by-sup-Lie}, the bilinear map $\vartriangleright:\mathfrak{osp}(1|2)\times\mathfrak{osp}(1|2)\to\mathfrak{osp}(1|2)$ defined by
$$F\vartriangleright Y=[\mathfrak{R}_L(F),Y]=\frac{1}{2}E_-,\;\;F\vartriangleright F=[\mathfrak{R}_L(F),F]=E_+,$$
$$F\vartriangleright E_+=[\mathfrak{R}_L(F),E_+]=Y,\;\;F\vartriangleright E_-=[\mathfrak{R}_L(F),E_-]=-G,$$
$$E_+\vartriangleright Y=[\mathfrak{R}_L(E_+),Y]=\frac{1}{2}G,\;\;E_+\vartriangleright F=[\mathfrak{R}_L(E_+),F]=-Y,$$
$$E_+\vartriangleright E_+=[\mathfrak{R}_L(E_+),E_+]=\frac{1}{2}E_-,$$
defines an odd Super-left-symmetric superalgebra structure on $\mathfrak{osp}(1|2)$.

\end{exa}
\begin{df}\label{def-right-R-B}
Let $(\mathfrak g,[\cdot,\cdot])$ be a Super-Lie superalgebra. A \textbf{right Rota-Baxter operator}   on $\mathfrak g$ is an homogeneous linear map $\mathfrak{R}:\mathfrak g\to\mathfrak g$ satifying
\begin{equation}\label{right-sym-R-B}
 [\mathfrak{R}(x),\mathfrak{R}(y)]=\mathfrak{R}\Big((-1)^{\overline{\mathfrak{R}}(\overline{x}+\overline{\mathfrak{R}}+\overline{[\cdot,\cdot]})}[\mathfrak{R}(x),y]+[x,\mathfrak{R}(y)] \Big),\;\forall x,y\in\mathcal{H}(\mathfrak g).   
\end{equation}
\end{df}
\begin{thm}\label{super-right-by-sup-Lie}
 Let $\mathfrak{R}$ be a right Rota-Baxter operator  on a Super-Lie superalgebra $(\mathfrak g,[\cdot,\cdot])$. Then $(\mathfrak g,\vartriangleleft)$ is a Super-right-symmetric superalgebra, where the bilinear map $"\vartriangleleft"$ is defined by 
\begin{equation}\label{grad-Lie-superalg-to-right-symm}
 x\vartriangleleft y=[x,\mathfrak{R}(y)],\;\forall x,y\in\mathcal{H}(\mathfrak g).   
\end{equation}   
\end{thm}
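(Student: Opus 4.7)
The plan is to mirror the proof of Theorem \ref{super-left-by-sup-Lie} on the opposite side: expand the right-symmetric associator condition for $\vartriangleleft$, use the right Super-Leibniz form of super-Jacobi to produce a $[\mathfrak{R}(y),\mathfrak{R}(z)]$ bracket, and then cancel that bracket against the defining relation \eqref{right-sym-R-B} of a right Rota-Baxter operator.

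First I would record that $\overline{\vartriangleleft}=\overline{\mathfrak{R}}+\overline{[\cdot,\cdot]}$, which is forced by $x\vartriangleleft y=[x,\mathfrak{R}(y)]$. Then I would form the difference
\[
\mathfrak{ass}_\vartriangleleft(x,y,z)-(-1)^{(\overline{y}+\overline{\vartriangleleft})(\overline{z}+\overline{\vartriangleleft})}\mathfrak{ass}_\vartriangleleft(x,z,y)
\]
and unfold each term using \eqref{grad-Lie-superalg-to-right-symm} to land inside the Super-Lie bracket, producing
\[
[[x,\mathfrak{R}(y)],\mathfrak{R}(z)]-[x,\mathfrak{R}([y,\mathfrak{R}(z)])]-(-1)^{(\overline{y}+\overline{\vartriangleleft})(\overline{z}+\overline{\vartriangleleft})}\bigl([[x,\mathfrak{R}(z)],\mathfrak{R}(y)]-[x,\mathfrak{R}([z,\mathfrak{R}(y)])]\bigr).
\]
Next, since a Super-Lie superalgebra is in particular a right Super-Leibniz superalgebra, the identity \eqref{right-Leibniz-identity} applied with $y\mapsto\mathfrak{R}(y)$ and $z\mapsto\mathfrak{R}(z)$ rewrites $[[x,\mathfrak{R}(y)],\mathfrak{R}(z)]$ as $[x,[\mathfrak{R}(y),\mathfrak{R}(z)]]$ plus a $(-1)^{(\overline{y}+\overline{\vartriangleleft})(\overline{z}+\overline{\vartriangleleft})}[[x,\mathfrak{R}(z)],\mathfrak{R}(y)]$ correction, which cancels the symmetric outer term. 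What remains is $[x,[\mathfrak{R}(y),\mathfrak{R}(z)]]-[x,\mathfrak{R}([y,\mathfrak{R}(z)])]+(-1)^{(\overline{y}+\overline{\vartriangleleft})(\overline{z}+\overline{\vartriangleleft})}[x,\mathfrak{R}([z,\mathfrak{R}(y)])]$. Finally I would apply \eqref{right-sym-R-B} to rewrite $[\mathfrak{R}(y),\mathfrak{R}(z)]$ and use \eqref{cond-graded-sup-ant} to turn $\mathfrak{R}([\mathfrak{R}(y),z])$ into a multiple of $\mathfrak{R}([z,\mathfrak{R}(y)])$, after which the remaining two inputs to $[x,-]$ are equal up to a sign that precisely matches $(-1)^{(\overline{y}+\overline{\vartriangleleft})(\overline{z}+\overline{\vartriangleleft})}$, forcing the expression to vanish.

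The main obstacle is pure sign bookkeeping: one must verify the congruence
\[
\overline{\mathfrak{R}}(\overline{y}+\overline{\mathfrak{R}}+\overline{[\cdot,\cdot]})+(\overline{y}+\overline{\mathfrak{R}}+\overline{[\cdot,\cdot]})(\overline{z}+\overline{[\cdot,\cdot]})\equiv(\overline{y}+\overline{\vartriangleleft})(\overline{z}+\overline{\vartriangleleft})\pmod{2},
\]
arising from composing the super-skewsymmetry sign with the right R-B sign, so that the two surviving terms actually differ by a minus. Expanding both sides with $\overline{\vartriangleleft}=\overline{\mathfrak{R}}+\overline{[\cdot,\cdot]}$ and exploiting $2\overline{\mathfrak{R}}\,\overline{[\cdot,\cdot]}\equiv 0$ confirms the match, and the theorem follows. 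No additional conceptual ingredient beyond those already used in Theorem \ref{super-left-by-sup-Lie} is required.
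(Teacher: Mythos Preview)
Your proposal is correct and follows essentially the same approach as the paper: expand the associator difference in terms of brackets via \eqref{grad-Lie-superalg-to-right-symm}, then combine the right Super-Leibniz identity \eqref{right-Leibniz-identity}, super-skewsymmetry \eqref{cond-graded-sup-ant}, and the right Rota--Baxter relation \eqref{right-sym-R-B} to force vanishing. The only cosmetic difference is the order of application---the paper first rewrites the $\mathfrak{R}$-terms using \eqref{cond-graded-sup-ant} and \eqref{right-sym-R-B} and then invokes Jacobi, while you apply Leibniz first---but the sign verification you isolate is exactly the one implicitly used in the paper.
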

\begin{proof}
Let $x,y,z\in\mathcal{H}(\mathfrak g)$. By using Eqs. \eqref{right-Leibniz-identity} and \eqref{right-sym-R-B}, a direct computation gives that
\begin{align*}
\mathfrak{ass}_\vartriangleleft(x,y,z)-(-1)^{(\overline{y}+\overline{\vartriangleleft})(\overline{z}+\overline{\vartriangleleft})}\mathfrak{ass}_\vartriangleleft(x,z,y)&=(x\vartriangleleft y)\vartriangleleft z-x\vartriangleleft(y\vartriangleleft z)-(-1)^{(\overline{y}+\overline{\vartriangleleft})(\overline{z}+\overline{\vartriangleleft})}(x\vartriangleleft z)\vartriangleleft y\\&+(-1)^{(\overline{y}+\overline{\vartriangleleft})(\overline{z}+\overline{\vartriangleleft})}x\vartriangleleft(z\vartriangleleft y)\\&=[[x,\mathfrak{R}(y)],\mathfrak{R}(z)]-[x,\mathfrak{R}[y,\mathfrak{R}(z)]]\\&-(-1)^{(\overline{y}+\overline{[\cdot,\cdot]}+\overline{\mathfrak{R}})(\overline{z}+\overline{[\cdot,\cdot]}+\overline{\mathfrak{R}})}[[x,\mathfrak{R}(z)],\mathfrak{R}(y)]\\&+(-1)^{(\overline{y}+\overline{[\cdot,\cdot]}+\overline{R})(\overline{z}+\overline{[\cdot,\cdot]}+\overline{\mathfrak{R}})}[x,\mathfrak{R}[z,\mathfrak{R}(y)]]\\&=-\Big([x,\mathfrak{R}[y,\mathfrak{R}(z)]]+(-1)^{\overline{\mathfrak{R}}(\overline{y}+\overline{[\cdot,\cdot]}+\overline{\mathfrak{R}})}[x,\mathfrak{R}[\mathfrak{R}(y),z]]\Big)\\&+\Big([[x,\mathfrak{R}(y)],\mathfrak{R}(z)]-(-1)^{(\overline{y}+\overline{[\cdot,\cdot]}+\overline{\mathfrak{R}})(\overline{z}+\overline{[\cdot,\cdot]}+\overline{\mathfrak{R}})}\Big)[[x,\mathfrak{R}(z)],\mathfrak{R}(y)]\\&=-[x,[\mathfrak{R}(y),\mathfrak{R}(z)]]+[[x,\mathfrak{R}(y)],\mathfrak{R}(z)]\\&+(-1)^{(\overline{y}+\overline{[\cdot,\cdot]}+\overline{\mathfrak{R}})(\overline{x}+\overline{[\cdot,\cdot]})}[\mathfrak{R}(y),[x,\mathfrak{R}(z)]]\\&= -[x,[\mathfrak{R}(y),\mathfrak{R}(z)]]+[x,[\mathfrak{R}(y),\mathfrak{R}(z)]]\\&=0.  
\end{align*}
Then, $(\mathfrak g,\vartriangleleft)$ is a Super-right-symmetric superalgebra.
\end{proof}
\begin{exa}\label{expRRB}
Let us consider the even Lie Superalgebra $\mathfrak{osp}(1|2)$ given in the Example \ref{exp-LRB}. Define the odd linear map $\mathfrak{R}_R:\mathfrak{osp}(1|2)\to\mathfrak{osp}(1|2)$ by
$$\mathfrak{R}_R(F)=E_-,\quad\mathfrak{R}_R(E_+)=\frac{1}{2}G,$$
and the other terms are zeros. Then $\mathfrak{R}_R$ is a right Rota-Baxter operator   on $\mathfrak{osp}(1|2)$. So, by using Theorem \ref{super-right-by-sup-Lie}, the bilinear map $\vartriangleleft:\mathfrak{osp}(1|2)\times\mathfrak{osp}(1|2)\to\mathfrak{osp}(1|2)$ defined by
$$Y\vartriangleleft F=[Y,\mathfrak{R}_R(F)]=-\frac{1}{2}E_-,\;\;F\vartriangleleft F=[F,\mathfrak{R}_R(F)]=-E_+,$$
$$F\vartriangleleft E_+=[F,\mathfrak{R}_R(E_+)]=Y,\;\;E_-\vartriangleleft F=[E_-,\mathfrak{R}_R(F)]=-G,$$
$$Y\vartriangleleft E_+=[Y,\mathfrak{R}_R(E_+)]=-\frac{1}{2}G,\;\;E_+\vartriangleleft F=[E_+,\mathfrak{R}_R(F)]=Y,$$
$$E_+\vartriangleleft E_+=[E_+,\mathfrak{R}_R(E_+)]=\frac{1}{2}E_-,$$
defines an odd Super-right-symmetric superalgebra structure on $\mathfrak{osp}(1|2)$.

\end{exa}
\section{Represenations and derivations of Super-Lie superlagebras}\label{Sec4}

In this section, we define the notion of representation with parity of Super-Lie superalgebras and to construct its
associated dual representation. A characterisation  of representations  on a  Super-Lie superalgebra structure via semi-direct product   has been studied. In addition, we are  required to introduce the notions of right and left derivations of Super-Lie superalgebras in which, we show that, the converse of an   invertible  left (right) derivation define a right (left) Rota-Baxter operator.  Still we have not forgotten to give some examples and associated fundamental results.
\subsection{Represenations   of Super-Lie superalgebras}
Let $(\mathfrak g, [\cdot,\cdot])$ be a Super-Lie superalgebra and $V$ be a $\mathbb Z_2$-graded vector space.
\begin{df}
 A linear map $\rho:\mathfrak g\to End(V)$ is called representation of $\mathfrak g$ if $\overline{\rho}=\overline{[\cdot,\cdot]}$ and it satisfies
\begin{equation}\label{cond-rep-grad-Lie}
    \rho([x,y])=\rho(x)\rho(y)-(-1)^{(\overline{x}+\overline{[\cdot,\cdot]})(\overline{y}+\overline{[\cdot,\cdot]})}\rho(y)\rho(x),\;\forall x,y\in\mathcal{H}(\mathfrak g).
\end{equation}
We also say that $V$ is a $\mathfrak g$-module. This representation is denoted by $(V,\rho)$.
\end{df}
\begin{exa}
For any $x\in\mathcal{H}(\mathfrak g)$, the linear map $ad:\mathfrak g\to End(\mathfrak g);\;x\mapsto ad(x)$ (or $ad_x$) defined by
$$ad_x(y)=[x,y],\;\forall y\in\mathcal{H}(\mathfrak g),$$
defines a representation on $\mathfrak g$ called adjoint representation. Indeed, for any $x,y,z\in\mathcal{H}(\mathfrak g)$, by using the identity \eqref{Left-Leibniz-identity}, we have
\begin{align*}
ad_{[x,y]}(z)&=[[x,y],z]= [x,[y,z]]-(-1)^{(\overline{x}+\overline{[\cdot,\cdot]})(\overline{y}+\overline{[\cdot,\cdot]})}[y,[x,z]]\\&=ad_x\circ ad_y(z)-(-1)^{(\overline{x}+\overline{[\cdot,\cdot]})(\overline{y}+\overline{[\cdot,\cdot]})}ad_y\circ ad_x(z).   
\end{align*}
Then, $ad[x,y]=ad_x\circ ad_y-(-1)^{(\overline{x}+\overline{[\cdot,\cdot]})(\overline{y}+\overline{[\cdot,\cdot]})}ad_y\circ  ad_x,\;\forall x,y\in\mathcal{H}(\mathfrak g)$, which gives that, the map "$ad$" is a representation of $\mathfrak g$.
\end{exa}
\begin{prop}
Let   $V$ be a $\mathbb{Z}_2$-graded vector space and $\rho:\mathfrak g\to End_{\mathbb{K}}(V)$ be a linear map. Then $(V,\rho)$ is a representation of $\mathfrak g$ if and only if, $\mathfrak{g}\ltimes_\rho V:=(\mathfrak{g}\oplus V,[\cdot,\cdot]_{\mathfrak{g}\oplus V})$ is a Super-Lie superalgebra, where the bracket
$[\cdot,\cdot]_{\mathfrak{g}\oplus V}$ is defined by
\begin{equation}\label{crochet-direct-sum}
[x+u,y+v]_{\mathfrak{g}\oplus V}=[x,y]+\rho(x)(v)-(-1)^{(\overline{x}+\overline{[\cdot,\cdot]})(\overline{y}+\overline{[\cdot,\cdot]})}\rho(y)(u),
\end{equation}
for all $x,y\in \mathcal{H}(\mathfrak{g})$ and $u,v\in\mathcal{H}(V)$. We call $\mathfrak{g}\ltimes_\rho V$ the semi-direct product of the Super-Lie superalgebra $(\mathfrak{g},[\cdot,\cdot])$ and $V$. Note that $\overline{x+u}=\overline{x}=\overline{u}$.
\end{prop}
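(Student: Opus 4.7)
The plan is to verify the two axioms of Definition \ref{def-graded-Lie-super-alg} for the bracket $[\cdot,\cdot]_{\mathfrak g\oplus V}$ of \eqref{crochet-direct-sum}, and to pinpoint where the representation condition \eqref{cond-rep-grad-Lie} is used.

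First I would check that \eqref{crochet-direct-sum} defines a bilinear map of parity $\overline{[\cdot,\cdot]}$: under the standing assumption $\overline{\rho}=\overline{[\cdot,\cdot]}$, each of the three terms $[x,y]$, $\rho(x)(v)$, $\rho(y)(u)$ shifts the total $\mathbb{Z}_2$-degree by exactly $\overline{[\cdot,\cdot]}$, and super-skewsymmetry \eqref{cond-graded-sup-ant} of $[\cdot,\cdot]_{\mathfrak g\oplus V}$ then follows at once from the super-skewsymmetry of $[\cdot,\cdot]$ on $\mathfrak g$ together with the sign explicitly placed in front of the $\rho(y)(u)$ term in \eqref{crochet-direct-sum}. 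This step uses no property of $\rho$ beyond linearity and holds in both directions of the equivalence.

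Next, for the super-Jacobi identity \eqref{graded-Jac-id} on the semi-direct product, I would take three homogeneous elements $z_i=x_i+u_i$, $i=1,2,3$, with $x_i\in\mathcal H(\mathfrak g)$ and $u_i\in\mathcal H(V)$, and organize the expansion by counting how many of the $z_i$ lie in $V$. Because $[u,v]_{\mathfrak g\oplus V}=0$ whenever $u,v\in V$, every term that would involve a bracket of two $V$-entries vanishes. Consequently: the all-$\mathfrak g$ case reduces to the super-Jacobi identity already known to hold in $(\mathfrak g,[\cdot,\cdot])$; the cases with two or three $V$-entries vanish termwise; the only substantive case is exactly one entry from $V$.

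In that remaining case, say $x,y\in\mathcal H(\mathfrak g)$ and $v\in\mathcal H(V)$, using super-skewsymmetry of $[\cdot,\cdot]_{\mathfrak g\oplus V}$ to move the $V$-entry to the rightmost slot in each cyclic term, every contribution of \eqref{graded-Jac-id} collapses to a Koszul-signed combination of the three monomials $\rho(x)\rho(y)(v)$, $\rho(y)\rho(x)(v)$, $\rho([x,y])(v)$. I expect the three cyclic contributions to sum, up to an overall nonzero sign, to
\[
\Bigl(\rho([x,y])-\rho(x)\rho(y)+(-1)^{(\overline{x}+\overline{[\cdot,\cdot]})(\overline{y}+\overline{[\cdot,\cdot]})}\rho(y)\rho(x)\Bigr)(v)=0,
\]
which is precisely \eqref{cond-rep-grad-Lie} evaluated on $v$. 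Since $x,y,v$ are arbitrary homogeneous elements, this identity holds for all of them if and only if $(V,\rho)$ is a representation, which proves both implications simultaneously.

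The main obstacle will be the sign bookkeeping in this one-$V$ case: one must carefully track the Koszul signs picked up from the cyclic permutations in \eqref{graded-Jac-id} and from every application of super-skewsymmetry used to bring the $V$-entry to the last position, and verify that the coefficients of the three monomials above collapse to match \eqref{cond-rep-grad-Lie} exactly (rather than, say, a scalar multiple of it, which would break the converse direction). Once this sign check is carried out, the biconditional follows.
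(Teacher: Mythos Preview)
Your proposal is correct and follows essentially the same route as the paper. The paper verifies super-skewsymmetry exactly as you do, then computes the three double brackets $[x+u,[y+v,z+w]]_{\mathfrak g\oplus V}$, $[[x+u,y+v]_{\mathfrak g\oplus V},z+w]_{\mathfrak g\oplus V}$, and $[y+v,[x+u,z+w]]_{\mathfrak g\oplus V}$ in full and then sets $u=v=0$ to isolate the representation condition---which is precisely your one-$V$-entry case, arrived at by expanding via bilinearity instead of specializing at the end. One small remark: your parenthetical worry that a nonzero scalar multiple of \eqref{cond-rep-grad-Lie} ``would break the converse direction'' is unfounded, since a nonzero overall sign still gives an equivalence; the sign check matters only to confirm the expression does not collapse to zero identically.
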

\begin{proof}
Let $x,y\in \mathcal{H}(\mathfrak{g})$ and $u,v\in\mathcal{H}(V)$, we have
\begin{align*}
[x+u,y+v]_{\mathfrak{g}\oplus V}&=[x,y]+\rho(x)(v)-(-1)^{(\overline{x}+\overline{[\cdot,\cdot]})(\overline{y}+\overline{[\cdot,\cdot]})}\rho(y)(u)\\&=-(-1)^{(\overline{x}+\overline{[\cdot,\cdot]})(\overline{y}+\overline{[\cdot,\cdot]})}[y,x]-(-1)^{(\overline{x}+\overline{[\cdot,\cdot]})(\overline{y}+\overline{[\cdot,\cdot]})}\rho(y)(u)+\rho(x)(v)\\&=-(-1)^{(\overline{x}+\overline{[\cdot,\cdot]})(\overline{y}+\overline{[\cdot,\cdot]})}([y,x]+\rho(y)(u)-(-1)^{(\overline{x}+\overline{[\cdot,\cdot]})(\overline{y}+\overline{[\cdot,\cdot]})}\rho(x)(v))\\&= (-1)^{(\overline{x}+\overline{[\cdot,\cdot]})(\overline{y}+\overline{[\cdot,\cdot]})} [y+v,x+u]_{\mathfrak{g}\oplus V}.  
\end{align*}
Then, the product $[\cdot,\cdot]_{\mathfrak{g}\oplus V}$ is $\mathbb{Z}_2$-graded super-antisymmetric. 
It remains to show that, the identity \eqref{Left-Leibniz-identity} is satisfied. Let $x,y,z\in \mathcal{H}(\mathfrak{g})$ and $u,v,w\in\mathcal{H}(V)$, then by a direct computation, we have:
\begin{align*}
[x+u,[y+v,z+w]_{\mathfrak{g}\oplus V}]_{\mathfrak{g}\oplus V}&=[x,[y,z]]+\rho(x)\Big(\rho(y)w-(-1)^{(\overline{y}+\overline{[\cdot,\cdot]})(\overline{z}+\overline{[\cdot,\cdot]})}\rho(z)v\Big)\\&-(-1)^{(\overline{x}+\overline{[\cdot,\cdot]})(\overline{y}+\overline{z})}\rho([y,z])u,    
\end{align*}

\begin{align*}
[[x+u,y+v]_{\mathfrak{g}\oplus V},z+w]_{\mathfrak{g}\oplus V}&=[[x,y],z]+\rho([x,y])w\\&-(-1)^{(\overline{x}+\overline{y})(\overline{z}+\overline{[\cdot,\cdot]})}\rho(z)\Big(\rho(x)v-(-1)^{(\overline{x}+\overline{[\cdot,\cdot]})(\overline{y}+\overline{[\cdot,\cdot]})}\rho(y)u\Big),
\end{align*}
and
\begin{align*}
 [y+v,[x+u,z+w]_{\mathfrak{g}\oplus V}]_{\mathfrak{g}\oplus V}&=[y,[x,z]]+\rho(y)\Big(\rho(x)w-(-1)^{(\overline{x}+\overline{[\cdot,\cdot]})(\overline{z}+\overline{[\cdot,\cdot]})}\rho(z)u\Big)\\&-(-1)^{(\overline{y}+\overline{[\cdot,\cdot]})(\overline{x}+\overline{z})}\rho([x,z])v.   
\end{align*}
Then, $\mathfrak{g}\ltimes_\rho V:=(\mathfrak{g}\oplus V,[\cdot,\cdot]_{\mathfrak{g}\oplus V})$ is a Super-Lie superalgebra if and only if

\begin{align*}
[x+u,[y+v,z+w]_{\mathfrak{g}\oplus V}]_{\mathfrak{g}\oplus V}&=[[x+u,y+v]_{\mathfrak{g}\oplus V},z+w]_{\mathfrak{g}\oplus V}\\&+(-1)^{(\overline{x}+\overline{[\cdot,\cdot]})(\overline{y}+\overline{[\cdot,\cdot]})}[y+v,[x+u,z+w]_{\mathfrak{g}\oplus V}]_{\mathfrak{g}\oplus V},
\end{align*}
for any, $x,y,z\in\mathcal{H}(\mathfrak g)$ and $u,v,w\in\mathcal{H}(V)$. Then, by applying the fact that $(\mathfrak g,[\cdot,\cdot])$ is a Super-Lie superalgebra and we take $u=v=0$, we get
$$\rho[x,y]w=\rho(x)\rho(y)w-(-1)^{(\overline{x}+\overline{[\cdot,\cdot]})(\overline{y}+\overline{[\cdot,\cdot]})}\rho(y)\rho(x)w$$
for any homogeneous element $w\in V$, which gives the result.
\end{proof}
Let   $(V,\rho)$ be a representation of $\mathfrak g$. Define $\rho^*:\mathfrak g \to End(V^*)$ by
\begin{equation}\label{DulaRep}
    \rho^*(x)(\xi)=-(-1)^{\overline{\xi}(\overline{x}+\overline{\rho})}\xi\circ\rho(x), \quad \forall\quad x\in\mathcal{H}(\mathfrak g)\quad \text{and }\quad \xi\in \mathcal{H}(V^*).
\end{equation}

\begin{prop}

    With the above notations, $\rho^*$ is a represenation of $\mathfrak g$ on the dual space $V^*$, which is called the dual representation of $\rho$.
\end{prop}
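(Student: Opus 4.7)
The plan is to verify directly that $\rho^*$ satisfies the two conditions in the definition of a representation: the parity condition $\overline{\rho^*}=\overline{[\cdot,\cdot]}$, and the compatibility identity \eqref{cond-rep-grad-Lie}. The whole computation is a sign-bookkeeping exercise built on top of the fact that $\rho$ itself is a representation.

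First I would check the parity. Since $\rho^*(x)(\xi)$ equals $\xi\circ\rho(x)$ up to a scalar, and $\xi\circ\rho(x)\in V^*$ has $\mathbb{Z}_2$-degree $\overline{\xi}+\overline{x}+\overline{\rho}$, the operator $\rho^*(x)$ is a homogeneous endomorphism of $V^*$ of degree $\overline{x}+\overline{\rho}$. Hence $\overline{\rho^*}=\overline{\rho}=\overline{[\cdot,\cdot]}$, as required.

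Next, I would fix homogeneous $x,y\in\mathcal{H}(\mathfrak g)$ and $\xi\in\mathcal{H}(V^*)$ and expand both sides of \eqref{cond-rep-grad-Lie} applied to $\xi$. On the right, applying \eqref{DulaRep} twice gives
\[
\rho^*(x)\rho^*(y)(\xi)=(-1)^{(\overline{\xi}+\overline{y}+\overline{\rho})(\overline{x}+\overline{\rho})+\overline{\xi}(\overline{y}+\overline{\rho})}\,\xi\circ\rho(y)\circ\rho(x),
\]
and the analogous expression for $\rho^*(y)\rho^*(x)(\xi)$, with the two minus signs from \eqref{DulaRep} combining into a $+1$. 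On the left, using $\overline{[x,y]}=\overline{x}+\overline{y}+\overline{[\cdot,\cdot]}=\overline{x}+\overline{y}+\overline{\rho}$ and then the representation property of $\rho$, I would obtain
\[
\rho^*([x,y])(\xi)=-(-1)^{\overline{\xi}(\overline{x}+\overline{y})}\xi\circ\Bigl(\rho(x)\rho(y)-(-1)^{(\overline{x}+\overline{\rho})(\overline{y}+\overline{\rho})}\rho(y)\rho(x)\Bigr).
\]

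Finally, I would match the coefficients of $\xi\circ\rho(x)\rho(y)$ and $\xi\circ\rho(y)\rho(x)$ on both sides. For the $\xi\circ\rho(x)\rho(y)$ term, the right-hand side exponent simplifies via
\[
(\overline{x}+\overline{\rho})(\overline{y}+\overline{\rho})+(\overline{\xi}+\overline{x}+\overline{\rho})(\overline{y}+\overline{\rho})+\overline{\xi}(\overline{x}+\overline{\rho})\equiv \overline{\xi}(\overline{x}+\overline{y})\pmod{2},
\]
so both sides carry the sign $-(-1)^{\overline{\xi}(\overline{x}+\overline{y})}$. A similar $\mathbb{Z}_2$-reduction for the $\xi\circ\rho(y)\rho(x)$ coefficient shows both sides agree, which establishes \eqref{cond-rep-grad-Lie}.

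The main obstacle is strictly the Koszul sign accounting: unlike the classical case, $\overline{\rho}$ now contributes to every exchange and must be carried through each swap. Once the signs are collected and reduced modulo $2$, the representation property of $\rho$ does all the work.
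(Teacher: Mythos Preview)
Your proof is correct and follows essentially the same route as the paper: both arguments unwind the definition of $\rho^*$, use $\overline{\rho}=\overline{[\cdot,\cdot]}$ to simplify the sign in $\rho^*([x,y])(\xi)$ to $-(-1)^{\overline{\xi}(\overline{x}+\overline{y})}$, and then invoke the representation identity for $\rho$. The only cosmetic difference is that you compute both sides and match coefficients, whereas the paper starts from $\rho^*([x,y])(\xi)$ and rewrites it directly as $\rho^*(x)\rho^*(y)\xi-(-1)^{(\overline{x}+\overline{\rho})(\overline{y}+\overline{\rho})}\rho^*(y)\rho^*(x)\xi$; you also check the parity condition $\overline{\rho^*}=\overline{[\cdot,\cdot]}$ explicitly, which the paper leaves implicit.
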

\begin{proof}
 Let $x,y\in\mathcal{H}(\mathfrak g)$ and $\xi\in\mathcal{H}(V^*)$. Then, by using Eq. \eqref{cond-rep-grad-Lie}, we have 
 \begin{align*}
\rho^*([x,y])(\xi)&=-(-1)^{\overline{\xi}(\overline{x}+\overline{y}+\overline{\rho}+\overline{[\cdot,\cdot]})}\xi\rho[x,y]\\&=-(-1)^{\overline{\xi}(\overline{x}+\overline{y})}\xi\big(\rho(x)\rho(y)-(-1)^{(\overline{x}+\overline{\rho})(\overline{y}+\overline{\rho})}\rho(y)\rho(x)\big)\\&=-(-1)^{(\overline{x}+\overline{\rho})(\overline{y}+\overline{\rho})}\rho^*(y)\rho^*(x)\xi+\rho^*(x)\rho^*(y)\xi.    
 \end{align*}
 Then, $\rho^*([x,y])=\rho^*(x)\rho^*(y)-(-1)^{(\overline{x}+\overline{\rho})(\overline{y}+\overline{\rho})}\rho^*(y)\rho^*(x),\;\forall x,y\in\mathcal{H}(\mathfrak g)$. Thus, $\rho^*$ is a representation of $\mathfrak g$ on $V^*$. The proof is finished.
\end{proof}
\subsection{Derivations of Super-Lie superalgebras}
\begin{df}
A left derivation of a Super-Lie superalgebra $(\mathfrak g,[\cdot,\cdot])$ is an homogeneous linear map $\mathfrak D^L:\mathfrak g\to\mathfrak g$ satisfying the following relation
\begin{equation}\label{def-left-deriv-SLie-sup}
\mathfrak D^L[x,y]=[\mathfrak D^L(x),y]+(-1)^{\overline{\mathfrak D^L}(\overline{x}+\overline{[\cdot,\cdot]})}[x,\mathfrak D^L(y)]    
\end{equation}
for all $x,y\in\mathcal{H}(\mathfrak g)$. We denoted by $\mathfrak Der^L(\mathfrak g)$, the set of all left derivations of $\mathfrak g$.
\end{df}
\begin{lem}
The $\mathbb{Z}_2$-graded vector space  $\mathfrak Der^L(\mathfrak g)$ with commutator define a Lie superalgebra.
\end{lem}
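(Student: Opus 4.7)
The plan is to exhibit $\mathfrak{Der}^L(\mathfrak{g})$ as a $\mathbb{Z}_2$-graded Lie sub-superalgebra of $(\mathrm{End}(\mathfrak{g}), [\cdot,\cdot])$, where the latter already carries the Lie superalgebra structure induced by the graded commutator, as recalled in item (5) of the Conventions. Since super-skewsymmetry and the super-Jacobi identity are then inherited by restriction, only two things must be verified: (a) the defining relation \eqref{def-left-deriv-SLie-sup} is $\mathbb{Z}_2$-homogeneous, so that $\mathfrak{Der}^L(\mathfrak{g})$ is a $\mathbb{Z}_2$-graded subspace of $\mathrm{End}(\mathfrak{g})$, and (b) it is closed under $[f,g] = f\circ g - (-1)^{\overline{f}\,\overline{g}}g\circ f$.

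For (a), take an arbitrary $\mathfrak{D}^L \in \mathfrak{Der}^L(\mathfrak{g})$ and write $\mathfrak{D}^L = \mathfrak{D}_{\overline{0}} + \mathfrak{D}_{\overline{1}}$ with $\mathfrak{D}_i \in \mathrm{End}(\mathfrak{g})_i$. Plugging homogeneous $x,y$ into \eqref{def-left-deriv-SLie-sup} and projecting the resulting equality onto each graded component of $\mathfrak{g}$ separates the identity into two independent relations, one for each $\mathfrak{D}_i$, so both homogeneous parts individually satisfy the derivation identity.

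For (b), I take homogeneous left derivations $\mathfrak{D}_1, \mathfrak{D}_2$ of parities $\overline{d_1}, \overline{d_2}$ and $x,y \in \mathcal{H}(\mathfrak{g})$, and compute $\mathfrak{D}_1 \mathfrak{D}_2[x,y]$ by applying \eqref{def-left-deriv-SLie-sup} first to $\mathfrak{D}_2$ and then to $\mathfrak{D}_1$ on each of the two resulting inner brackets; note that the second application carries an extra Koszul sign because the first argument of the bracket now has shifted parity $\overline{d_2}+\overline{x}$ (resp.\ $\overline{x}$). Performing the symmetric computation for $\mathfrak{D}_2\mathfrak{D}_1[x,y]$ and assembling the signed difference that defines $[\mathfrak{D}_1,\mathfrak{D}_2]$, one obtains four ``double'' terms of the form $[\mathfrak{D}_i\mathfrak{D}_j(x),y]$ and $[x,\mathfrak{D}_i\mathfrak{D}_j(y)]$, together with four ``mixed'' terms of the form $[\mathfrak{D}_i(x),\mathfrak{D}_j(y)]$. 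The double terms combine into $[[\mathfrak{D}_1,\mathfrak{D}_2](x),y] + (-1)^{(\overline{d_1}+\overline{d_2})(\overline{x}+\overline{[\cdot,\cdot]})}[x,[\mathfrak{D}_1,\mathfrak{D}_2](y)]$, while the mixed terms cancel pairwise once Koszul signs are collected (using $2\,\overline{d_1}\,\overline{d_2} \equiv 0$). This shows $[\mathfrak{D}_1,\mathfrak{D}_2] \in \mathfrak{Der}^L(\mathfrak{g})$ with parity $\overline{d_1}+\overline{d_2}$.

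The main obstacle is exactly this sign bookkeeping in step (b): the four mixed terms cancel only when one correctly tracks the parity shift produced by $\mathfrak{D}_2(x)$ (resp.\ $\mathfrak{D}_1(x)$) occupying the first slot of a bracket of parity $\overline{[\cdot,\cdot]}$, together with the sign in the definition of the graded commutator. Once this closure is in hand, the Lie superalgebra axioms for $(\mathfrak{Der}^L(\mathfrak{g}),[\cdot,\cdot])$ follow immediately by restriction from $(\mathrm{End}(\mathfrak{g}),[\cdot,\cdot])$, completing the proof.
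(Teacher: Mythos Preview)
Your proposal is correct and follows essentially the same route as the paper: both arguments establish the closure of $\mathfrak{Der}^L(\mathfrak{g})$ under the graded commutator by expanding $\mathfrak{D}_1\mathfrak{D}_2[x,y]$ via two applications of \eqref{def-left-deriv-SLie-sup}, observing that the mixed terms cancel, and then inheriting the Lie-superalgebra axioms from $\mathrm{End}(\mathfrak{g})$. Your part (a) is a harmless extra check, but note that in the paper a left derivation is \emph{by definition} homogeneous, so $\mathfrak{Der}^L(\mathfrak{g})$ is automatically a graded subspace and this step is not strictly needed.
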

\begin{proof}
Let $\mathfrak D^L_1,\;\mathfrak D^{L}_2\in\mathfrak Der^L(\mathfrak g)$ and $\mathfrak D=[\mathfrak D^L_1 ,\mathfrak D^{L}_2]$. Then, for any  $x,y\in\mathcal{H}(\mathfrak g)$, we have 
\begin{align*}
    \mathfrak D[x,y]&=[\mathfrak D^L_1 ,\mathfrak D^{L}_2][x,y]\\&=\mathfrak D^L_1 (\mathfrak D^{L}_2[x,y])-(-1)^{\overline{\mathfrak D^L_1}\overline{\mathfrak D^L_2}}\mathfrak D^L_2 (\mathfrak D^{L}_1[x,y])\\&=\mathfrak D^L_1([\mathfrak D_2^L(x),y]+(-1)^{\overline{\mathfrak D_2^L}(\overline{x}+\overline{[\cdot,\cdot]})}[x,\mathfrak D_2^L(y)])\\&-(-1)^{\overline{\mathfrak D^L_1}\overline{\mathfrak D^L_2}}\mathfrak D^L_2([\mathfrak D_1^L(x),y]+(-1)^{\overline{\mathfrak D_1^L}(\overline{x}+\overline{[\cdot,\cdot]})}[x,\mathfrak D_1^L(y)])\\&=[\mathfrak D_1^L\mathfrak D^L_2(x),y]+(-1)^{\overline{\mathfrak D_1^L}(\overline{x}+\overline{\mathfrak D_2^L}+\overline{[\cdot,\cdot]})}[\mathfrak D_2^L(x),\mathfrak D_1^L(y)]+(-1)^{\overline{\mathfrak D_2^L}(\overline{x}+\overline{[\cdot,\cdot]})}[\mathfrak D_1^L(x),\mathfrak D_2^L(y)]\\&+(-1)^{(\overline{\mathfrak D_1^L}+\overline{\mathfrak D_2^L})(\overline{x}+\overline{[\cdot,\cdot]})}[x,\mathfrak D_1^L\mathfrak D_2^L(y)]-(-1)^{\overline{\mathfrak D^L_1}\overline{\mathfrak D^L_2}}[\mathfrak D^L_2\mathfrak D_1^L(x),y]-(-1)^{\overline{\mathfrak D^L_2}(\overline{x}+\overline{[\cdot,\cdot]})}[\mathfrak D_1^L(x),\mathfrak D^L_2(y)]\\&-(-1)^{\overline{\mathfrak D_1^L}(\overline{x}+\overline{\mathfrak D_2^L}+\overline{[\cdot,\cdot]})}[\mathfrak D_2^L(x),\mathfrak D_1^L(y)]-(-1)(-1)^{(\overline{\mathfrak D_1^L}+\overline{\mathfrak D_2^L})(\overline{x}+\overline{[\cdot,\cdot]})}[x,\mathfrak D_2^L\mathfrak D_1^L(y)]\\&=[(\mathfrak D_1^L\mathfrak D_2^L-(-1)^{\overline{\mathfrak D_1^L}\overline{\mathfrak D_2^L}}\mathfrak D_2^L\mathfrak D_1^L)(x),y]+(-1)^{(\overline{\mathfrak D_1^L}+\overline{\mathfrak D_2^L})(\overline{x}+\overline{[\cdot,\cdot]})}[x,(\mathfrak D_1^L\mathfrak D_2^L-(-1)^{\overline{\mathfrak D_1^L}\overline{\mathfrak D_2^L}}\mathfrak D_2^L\mathfrak D_1^L)(y)]\\&=[\mathfrak D(x),y]+(-1)^{\overline{\mathfrak D}(\overline{x}+\overline{[\cdot,\cdot]})}[x,\mathfrak D(y)].
\end{align*}
Then, $\mathfrak D=[\mathfrak D_1^L,\mathfrak D_2^L]\in\mathfrak Der^L(\mathfrak g)$, which gives that, $\mathfrak Der^L(\mathfrak g)$ is a Lie sub-superalgebra of $End(\mathfrak g)$. Therefore, $\mathfrak Der^L(\mathfrak g)$ is a Lie superalgebra. 
\end{proof}

\begin{exa}
For any fixed homogeneous element $x$ of a Super-Lie superalgebra $(\mathfrak g,[\cdot,\cdot])$, \\ the map $ad^L_x$ defined by Eq. \eqref{left-adj} define a left derivation of $\mathfrak g$, with the consideration that\\ $\overline{ad_x^L}=\overline{x}+\overline{[\cdot,\cdot]}$ as an immediate relation by the definition of $ad^L_x$.     
\end{exa}

\begin{df}
A right derivation of a Super-Lie superalgebra $(\mathfrak g,[\cdot,\cdot])$ is an homogeneous linear map $\mathfrak D^R:\mathfrak g\to\mathfrak g$ such that, the following condition hold
\begin{equation}\label{def-left-deriv-SLie-sup}
\mathfrak D^R[x,y]=[x,\mathfrak D^R(y)]+(-1)^{\overline{\mathfrak D^R}(\overline{y}+\overline{[\cdot,\cdot]})}[\mathfrak D^R(x),y]    
\end{equation}
for all $x,y\in\mathcal{H}(\mathfrak g)$. The set of all right derivations of $g$ is generally denoted by $\mathfrak Der^R(\mathfrak g)$.
\end{df}
\begin{exa}
For any fixed homogeneous element $x$ of a Super-Lie superalgebra $(\mathfrak g,[\cdot,\cdot])$, the map $ad^R_x$ defined by Eq. \eqref{right-adj} define a right derivation of $\mathfrak g$.     
\end{exa}
\begin{lem}
Let $(\mathfrak g,[\cdot,\cdot])$ be a Super-Lie superalgebra, then $(\mathfrak Der^R(\mathfrak g),[\cdot,\cdot])$ is a  Lie superalgebra.
\end{lem}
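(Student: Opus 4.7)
The plan is to mirror exactly the proof of the analogous statement for left derivations that was just carried out, replacing the defining identity of a left derivation by that of a right derivation and adjusting the sign conventions accordingly. Since $End(\mathfrak g)$ endowed with the graded commutator is already known (item (5) of the Conventions) to be a Lie superalgebra, it suffices to show that $\mathfrak{Der}^R(\mathfrak g)$ is closed under the graded commutator; then it automatically inherits the structure of a Lie sub-superalgebra.

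Concretely, I would fix $\mathfrak D_1^R,\mathfrak D_2^R\in \mathfrak{Der}^R(\mathfrak g)$ and set $\mathfrak D=[\mathfrak D_1^R,\mathfrak D_2^R]=\mathfrak D_1^R\circ\mathfrak D_2^R-(-1)^{\overline{\mathfrak D_1^R}\,\overline{\mathfrak D_2^R}}\mathfrak D_2^R\circ\mathfrak D_1^R$, with parity $\overline{\mathfrak D}=\overline{\mathfrak D_1^R}+\overline{\mathfrak D_2^R}$. For homogeneous $x,y\in\mathcal{H}(\mathfrak g)$ I would compute $\mathfrak D_1^R(\mathfrak D_2^R[x,y])$ by expanding $\mathfrak D_2^R[x,y]=[x,\mathfrak D_2^R(y)]+(-1)^{\overline{\mathfrak D_2^R}(\overline{y}+\overline{[\cdot,\cdot]})}[\mathfrak D_2^R(x),y]$ and then applying the right-derivation rule for $\mathfrak D_1^R$ to each of the two brackets, so that four terms appear. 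The symmetric computation for $\mathfrak D_2^R(\mathfrak D_1^R[x,y])$ gives four more terms. I would then subtract the second with the sign $(-1)^{\overline{\mathfrak D_1^R}\,\overline{\mathfrak D_2^R}}$.

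After this subtraction, the two ``mixed'' terms $[\mathfrak D_1^R(x),\mathfrak D_2^R(y)]$ and $[\mathfrak D_2^R(x),\mathfrak D_1^R(y)]$ should cancel pairwise once the Koszul signs are collected correctly, leaving only the ``pure'' terms $[(\mathfrak D_1^R\mathfrak D_2^R-(-1)^{\overline{\mathfrak D_1^R}\,\overline{\mathfrak D_2^R}}\mathfrak D_2^R\mathfrak D_1^R)(x),y]$ and $[x,(\mathfrak D_1^R\mathfrak D_2^R-(-1)^{\overline{\mathfrak D_1^R}\,\overline{\mathfrak D_2^R}}\mathfrak D_2^R\mathfrak D_1^R)(y)]$, each weighted by the correct sign. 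This produces precisely the defining identity of a right derivation for $\mathfrak D$, namely
\begin{equation*}
\mathfrak D[x,y]=[x,\mathfrak D(y)]+(-1)^{\overline{\mathfrak D}(\overline{y}+\overline{[\cdot,\cdot]})}[\mathfrak D(x),y].
\end{equation*}

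The only real obstacle is bookkeeping the Koszul signs; conceptually the proof is identical to the one given for $\mathfrak{Der}^L(\mathfrak g)$, but the asymmetry between the two arguments in the right-derivation identity (the correction sign depends on $\overline{y}$ rather than $\overline{x}$) means one has to be careful about which sign factor is pulled through which bracket. Once the cancellation of the mixed terms is verified, closure of $\mathfrak{Der}^R(\mathfrak g)$ under $[\cdot,\cdot]$ follows, and hence $(\mathfrak{Der}^R(\mathfrak g),[\cdot,\cdot])$ is a Lie sub-superalgebra of $End(\mathfrak g)$, completing the proof.
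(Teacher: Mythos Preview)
Your proposal is correct and matches the paper's approach: the paper actually omits the proof of this lemma entirely, leaving it implicit after giving the detailed computation for $\mathfrak{Der}^L(\mathfrak g)$, so your plan to mirror that argument with the right-derivation identity and the adjusted Koszul signs is exactly what is intended.
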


\begin{thm}
 Let $\mathfrak D$ be an homogeneous invertible linear map of a Super-Lie superalgebra $(\mathfrak g,[\cdot,\cdot])$. Then: 
\begin{enumerate}
\item
$\mathfrak D$ is a left drivation of $\mathfrak g$ if and only if $\mathfrak D^{-1}$ is a right Rota-Baxter operator   on $\mathfrak g.$
\item $\mathfrak D$ is a right drivation of $\mathfrak g$ if and only if $\mathfrak D^{-1}$ is a left Rota-Baxter operator   on $\mathfrak g.$
\end{enumerate}    
\end{thm}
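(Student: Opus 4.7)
The plan is to treat both parts by a single substitution argument. Since $\mathfrak D$ is homogeneous and invertible, $\overline{\mathfrak D^{-1}} = \overline{\mathfrak D}$ and $\mathfrak D \circ \mathfrak D^{-1} = \mathfrak D^{-1} \circ \mathfrak D = \mathrm{id}_{\mathfrak g}$; these are the only structural facts I will need. In each of the two equivalences I will not manipulate the super-Jacobi identity or the super-skewsymmetry of $[\cdot,\cdot]$ at all, since the statements on both sides are bilinear identities in two arguments, and one side is obtained from the other simply by changing variables through $\mathfrak D$.

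For part (1), direction $(\Rightarrow)$, I will start from the left-derivation identity
$$\mathfrak D[x,y] = [\mathfrak D(x), y] + (-1)^{\overline{\mathfrak D}(\overline x + \overline{[\cdot,\cdot]})}[x, \mathfrak D(y)],$$
substitute $x = \mathfrak D^{-1}(a)$ and $y = \mathfrak D^{-1}(b)$ for homogeneous $a, b \in \mathfrak g$, use $\overline{\mathfrak D^{-1}(a)} = \overline a + \overline{\mathfrak D}$ to rewrite the sign exponent, and then apply $\mathfrak D^{-1}$ to both sides. The resulting identity will match exactly the right Rota-Baxter condition \eqref{right-sym-R-B} for $\mathfrak R = \mathfrak D^{-1}$. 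The converse direction $(\Leftarrow)$ is the mirror: starting from \eqref{right-sym-R-B} with $\mathfrak R = \mathfrak D^{-1}$, substitute $a = \mathfrak D(x)$, $b = \mathfrak D(y)$, and apply $\mathfrak D$ to both sides to recover the left-derivation identity. Part (2) is entirely parallel, with left and right roles swapped: the asymmetry between a right derivation (sign attached to the first-argument term) and a left Rota-Baxter operator (sign attached to the second-argument term) fits the same substitution, with the sign exponent $\overline{\mathfrak D}(\overline y + \overline{[\cdot,\cdot]})$ transforming into $\overline{\mathfrak D}(\overline b + \overline{\mathfrak D} + \overline{[\cdot,\cdot]})$, which is the left Rota-Baxter exponent from \eqref{lef-sym-R-B}.

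The only step that requires care is the sign bookkeeping. In part (1), after the substitution $x = \mathfrak D^{-1}(a)$, the exponent $\overline{\mathfrak D}(\overline x + \overline{[\cdot,\cdot]})$ becomes $\overline{\mathfrak D}(\overline a + \overline{\mathfrak D} + \overline{[\cdot,\cdot]})$, which is precisely the exponent appearing in Definition \ref{def-right-R-B} once $\overline{\mathfrak R}$ is identified with $\overline{\mathfrak D}$; the analogous matching occurs in part (2). I anticipate that the main obstacle is not conceptual but purely clerical: verifying that after the substitution the two summands on the right-hand side of the derivation identity appear in the order and with the signs dictated by the chosen Rota-Baxter convention, and conversely. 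Once the exponents are seen to coincide, both implications in both parts are immediate from $\mathfrak D \mathfrak D^{-1} = \mathrm{id}$.
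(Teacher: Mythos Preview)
Your proposal is correct and follows essentially the same approach as the paper: both arguments reduce the equivalence to the change of variables $x=\mathfrak D^{-1}(a),\;y=\mathfrak D^{-1}(b)$ together with an application of $\mathfrak D^{-1}$, and the sign bookkeeping you describe (turning $\overline{\mathfrak D}(\overline x+\overline{[\cdot,\cdot]})$ into $\overline{\mathfrak D}(\overline a+\overline{\mathfrak D}+\overline{[\cdot,\cdot]})$) matches the paper's computation. The only cosmetic difference is that the paper applies $\mathfrak D^{-1}$ first and then substitutes $X=\mathfrak D(x),\;Y=\mathfrak D(y)$, whereas you substitute first and then apply $\mathfrak D^{-1}$; the two orders are equivalent.
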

\begin{proof}
\begin{enumerate}
\item Let $\mathfrak D$ be a left derivation of a Super-Lie superalgebra $(\mathfrak g,[\cdot,\cdot])$, then, for any two homogeneous elements $x,y\in\mathfrak g$, we have
$$\mathfrak D[x,y]=[\mathfrak D(x),y]+(-1)^{\overline{\mathfrak D}(\overline{x}+\overline{[\cdot,\cdot]})}[x,\mathfrak D(y)].$$    
Then $\mathfrak D$ is invertible if and only if
$$[x,y]=\mathfrak D^{-1}[\mathfrak D(x),y]+(-1)^{\overline{\mathfrak D}(\overline{x}+\overline{[\cdot,\cdot]})}\mathfrak D^{-1}[x,\mathfrak D(y)].$$
Suppose that $X=\mathfrak D(x)$ and $Y=\mathfrak D(y)$, we obtain
$$[\mathfrak D^{-1}(X),\mathfrak D^{-1}(Y)]=\mathfrak D^{-1}([X,\mathfrak D^{-1}(Y)]+(-1)^{\overline{\mathfrak D^{-1}}(\overline{\mathfrak D^{-1}}+\overline{X}+\overline{[\cdot,\cdot]})}[\mathfrak D^{-1}(X),Y]),$$
since the fact that $\overline{\mathfrak D}=\overline{\mathfrak D^{-1}}$. Then $\mathfrak D^{-1}$ is a right Rota-Baxter operator   on $\mathfrak g$.
\item This is a direct computation by a similar way given in $(1)$.
\end{enumerate}    
\end{proof}

\section{$3$-super-Lie superalgebras }
\label{Sec5}
In this section we introduce the case of $\mathbb Z_2$-graded $3$-Lie superalgebras  which called $3$-Super-Lie superalgebras in which we give some properties, examples and related interesting results. 
 \begin{df}
A {\it\bf{left ternary Super-Leibniz superalgebra}} is a couple $(\mathfrak{g},[\cdot,\cdot,\cdot])$ where, $\mathfrak g$ is a $\mathbb{Z}_2$-graded vector space and $[\cdot,\cdot,\cdot]$ be an homogeneous trilinear map with degree $\q\in\mathbb Z_2$ satisfying for any homogeneous elements $x,y,z\in\mathfrak g$ the following identity
\begin{align}\label{Left-3-Leibniz-identity}
 [x_1,x_2,[y_1,y_2,y_3]]&=[[x_1,x_2,y_1],y_2,y_3]+(-1)^{(\overline{x_1}+\overline{x_2}+\q)(\overline{y_1}+\q)}[y_1,[x_1,x_2,y_2],y_3]\\&+(-1)^{(\overline{x_1}+\overline{x_2}+\q)(\overline{y_1}+\overline{y_2})}[y_1,y_2,[x_1,x_2,y_3]]\nonumber.  
\end{align}
 
The identity \eqref{Left-3-Leibniz-identity} is called {\it\bf{left ternary Super-Leibniz identity}}.
\end{df}
Let $x_1,x_2$ be two fixed homogeneous elements of $\mathfrak g$. Define the linear map $ad^L_{x_1,x_2}:\mathfrak g\to\mathfrak g$ by
\begin{equation}\label{left-ter-adj}
ad^L_{x_1,x_2}(y)=[x_1,x_2,y],\;\forall y\in\mathcal{H}(\mathfrak g).    
\end{equation}
Then, $\overline{ad^L_{x_1,x_2}}=\overline{x_1}+\overline{x_2}+\q$ and the left ternary Super-Leibniz identity \eqref{Left-3-Leibniz-identity} can be written as follow
\begin{align}\label{Left-adj-Leibniz-identity}
ad^L_{x_1,x_2}[y_1,y_2,y_3]&=[ad^L_{x_1,x_2}(y_1),y_2,y_3]+(-1)^{(\overline{x_1}+\overline{x_2}+\q)(\overline{y_1}+\q)}[y_1,ad^L_{x_1,x_2}(y_2),y_3]\\&+(-1)^{(\overline{x_1}+\overline{x_2}+\q)(\overline{y_1}+\overline{y_2})}[y_1,y_2,ad^L_{x_1,x_2}(y_3)].\nonumber
\end{align}
 \begin{df}
A {\it\bf{right ternary Super-Leibniz superalgebra}} is a couple $(\mathfrak{g},[\cdot,\cdot,\cdot])$ consisting of a $\mathbb{Z}_2$-graded vector space $\mathfrak g$  and an homogeneous trilinear map $[\cdot,\cdot,\cdot]$ on $\mathfrak g$ with degree $\q\in\mathbb Z_2$ satisfying, for any homogeneous elements $x,y,z\in\mathfrak g$, the following identity
\begin{align}\label{right-3-Leibniz-identity}
 [[x_1,x_2,y_1],y_2,y_3]&=[x_1,x_2,[y_1,y_2,y_3]]+(-1)^{(\overline{y_2}+\overline{y_3}+\q)(\overline{y_1}+\q)}[x_1,[x_2,y_2,y_3],y_1]\\&+(-1)^{(\overline{x_2}+\overline{y_1})(\overline{y_2}+\overline{y_3}+\q)}[[x_1,y_2,y_3],x_2,y_1]\nonumber.  
\end{align}
 
The identity \eqref{right-3-Leibniz-identity} is called {\it\bf{right ternary Super-Leibniz identity}}.
\end{df}
Let $x_1,x_2$ be two fixed homogeneous elements of $\mathfrak g$. Define the linear map $ad^R_{x_1,x_2}:\mathfrak g\to\mathfrak g$ by
\begin{equation}\label{left-adj}
ad^R_{x_1,x_2}(y)=[y,x_1,x_2],\;\forall y\in\mathcal{H}(\mathfrak g).    
\end{equation}
Then, $\overline{ad^R_{x_1,x_2}}=\overline{x_1}+\overline{x_2}+\q$ and the left ternary Super-Leibniz identity \eqref{Left-3-Leibniz-identity} can be written as follow
\begin{align}\label{right-adj-ter-Leibniz-identity}
ad^R_{y_2,y_3}[x_1,x_2,y_1]&=[x_1,x_2,ad^R_{y_2,y_3}(y_1)]+(-1)^{(\overline{y_2}+\overline{y_3}+\q)(\overline{y_1}+\q)}[x_1,ad^R_{y_2,y_3}(x_2),y_1]\\&+(-1)^{(\overline{x_2}+\overline{y_1})(\overline{y_2}+\overline{y_3}+\q)}[ad^R_{y_2,y_3}(x_1),x_2,y_1].\nonumber
\end{align}

\begin{df}\label{def-3-sup-Lie}
Let $\mathfrak g=\mathfrak g_{\overline{0}}\oplus\mathfrak g_{\overline{1}}$ be a $\mathbb{Z}_2$-graded vector space. A trilinear mapping $(x,y,z)\in\mathfrak g\otimes\mathfrak g\otimes\mathfrak g\mapsto [x,y,z]\in\mathfrak g$ is said to be an homogeneous ternary Super-Lie bracket with degree $\q\in\mathbb Z_2$ if $\overline{[x,y,z]}=\overline{x}+\overline{y}+\overline{z}+\q$, it is $\mathbb{Z}_2$-graded super-skewsymmetric, i.e.,
\begin{equation}\label{ter-super-skew-sym}
    [x,y,z]=-(-1)^{(\overline{x}+\q)(\overline{y}+\q)}[y,x,z]=-(-1)^{(\overline{y}+\q)(\overline{z}+\q)}[x,z,y],
\end{equation}
and satisfies the $\mathbb{Z}_2$-graded Filippov-Jacobi identity
\begin{align}
[x,y,[z,t,u]]&=[[x,y,z],t,u]+(-1)^{(\overline{x}+\overline{y}+\q)(\overline{z}+\q)}[z,[x,y,t],u]\label{graded-Filippov-ident}\\&+(-1)^{(\overline{x}+\overline{y}+\q)(\overline{z}+\overline{t})}[z,t,[x,y,u]] \nonumber   
\end{align}

A vector superspace $\mathfrak g$, together with an homogeneous ternary Super-Lie bracket defined on it, is called a
$3$-Super-Lie superalgebra.
\end{df}
\begin{rem}
  If $\q=0$, we recover the (even) $3$-Lie superalgebra structure (see \cite{CantKac} for more details) and if $\q=1$, which is called  odd $3$-Lie superalgebra.  
\end{rem}
\begin{exa}\label{ex-odd-3-Lie}
 Let $\mathfrak g=<e_0,e_1>$ be a two dimentional $\mathbb Z_2$-graded vector space where $\mathfrak g_{\overline{0}}=<e_0>$ and $\mathfrak g_{\overline{1}}=<e_1>$. Define the trilinear product  $[\cdot,\cdot,\cdot]:\mathfrak g\times\mathfrak g\to\mathfrak g$ by
 $$[e_1,e_1,e_1]=e_0,$$
 and the other products are zeros. Then $(\mathfrak g,[\cdot,\cdot,\cdot])$ defines an odd $3$-Lie superalgebra.
\end{exa}

\begin{df}
Let $(\mathfrak g,[\cdot,\cdot,\cdot])$ be a $3$-Super-Lie superalgebra. An homogeneous linear map $\mathfrak{R}:\mathfrak g\to\mathfrak g$ is called \textbf{ Rota-Baxter operator} (called also \textbf{left Rota-Baxter operator}) with degree $\r\in\mathbb Z_3$ on $\mathfrak g$, if it satisfies the following rule 
\begin{align}\label{l-R-B-ternary}
[\mathfrak{R}x,\mathfrak{R}y,\mathfrak{R}z]&=\mathfrak{R}\Big([\mathfrak{R}x,\mathfrak{R}y,z]+(-1)^{\r(\r+\overline{y}+\overline{[\cdot,\cdot,\cdot]})}[\mathfrak{R}x,y,\mathfrak{R}z]+(-1)^{\r(\overline{x}+\overline{y})}[x,\mathfrak{R}y,\mathfrak{R}z]\Big)
\end{align}
for any homogeneous elements $x,y,z$ of $\mathfrak g$.
\end{df}
By a straightforward computation, we obtain the following results 
\begin{prop}
Let $\mathfrak{R}$ be a Rota-Baxter operator on  a $3$-Super-Lie superalgebra $(\mathfrak g,[\cdot,\cdot,\cdot])$. Then $(\mathfrak g,[\cdot,\cdot,\cdot]_{\mathfrak{R}})$ is a $3$-Super-Lie superalgebra, where  
\begin{align*}\label{l-R-B-ternary}
[ x, y,z]_{\mathfrak{R}}&= [\mathfrak{R}x,\mathfrak{R}y,z]+(-1)^{\r(\r+\overline{y}+\overline{[\cdot,\cdot,\cdot]})}[\mathfrak{R}x,y,\mathfrak{R}z]+(-1)^{\r(\overline{x}+\overline{y})}[x,\mathfrak{R}y,\mathfrak{R}z]
\end{align*}
for any homogeneous elements $x,y,z$ of $\mathfrak g$. Moreover, $\overline{[\cdot,\cdot,\cdot]_{\mathfrak{R}}}=\overline{[\cdot,\cdot,\cdot]}$ and ${\mathfrak{R}}$ is $3$-super-Lie superalgeras morphism from $(\mathfrak g,[\cdot,\cdot,\cdot]_{\mathfrak{R}})$ to $(\mathfrak g,[\cdot,\cdot,\cdot ])$.
\end{prop}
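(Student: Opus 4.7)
The plan is to verify, in order, the parity, the morphism property, the super-skewsymmetry, and the Filippov--Jacobi identity for the new bracket $[\cdot,\cdot,\cdot]_{\mathfrak{R}}$. The first two are immediate. Every summand in the definition of $[x,y,z]_{\mathfrak{R}}$ contains exactly two factors of $\mathfrak{R}$ and one copy of $[\cdot,\cdot,\cdot]$, so its degree is $2\r+\q+\overline{x}+\overline{y}+\overline{z}\equiv \overline{x}+\overline{y}+\overline{z}+\q\pmod{2}$, whence $\overline{[\cdot,\cdot,\cdot]_{\mathfrak{R}}}=\q$. The morphism assertion is built into the very form of the definition: applying $\mathfrak{R}$ to the defining formula for $[x,y,z]_{\mathfrak{R}}$ and comparing with \eqref{l-R-B-ternary} yields $\mathfrak{R}[x,y,z]_{\mathfrak{R}}=[\mathfrak{R}x,\mathfrak{R}y,\mathfrak{R}z]$ at once.

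For the $\mathbb{Z}_{2}$-graded super-skewsymmetry \eqref{ter-super-skew-sym} of $[\cdot,\cdot,\cdot]_{\mathfrak{R}}$, I would handle the two elementary transpositions separately. For the $(1,2)$-swap I expand $[y,x,z]_{\mathfrak{R}}$ by the defining formula and then apply super-skewsymmetry of the original bracket inside each of the three summands: $[\mathfrak{R}y,\mathfrak{R}x,z]$ transposes into $[\mathfrak{R}x,\mathfrak{R}y,z]$, while the summand $[\mathfrak{R}y,x,\mathfrak{R}z]$ pairs up with the term $[x,\mathfrak{R}y,\mathfrak{R}z]$ of $[x,y,z]_{\mathfrak{R}}$, and the summand $[y,\mathfrak{R}x,\mathfrak{R}z]$ pairs up with $[\mathfrak{R}x,y,\mathfrak{R}z]$. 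The key bookkeeping is that the Koszul exponent $(\r+\overline{y}+\q)(\r+\overline{x}+\q)$ produced by these transpositions combines, via $\r^{2}\equiv\r\pmod{2}$, with the explicit prefactors of the definition to reproduce the factor $(-1)^{(\overline{x}+\q)(\overline{y}+\q)}$ demanded by \eqref{ter-super-skew-sym}. The $(2,3)$-swap is treated by exactly the same mechanism.

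The main obstacle is the $\mathbb{Z}_{2}$-graded Filippov--Jacobi identity \eqref{graded-Filippov-ident} for $[\cdot,\cdot,\cdot]_{\mathfrak{R}}$. My plan is to expand both sides of the identity by two applications of the defining formula, producing $9$ nested brackets on the left and $9$ more from each of the three summands of the right-hand side, for a total of $36$ composite terms. Each such term is a composition $[\cdot,\cdot,\cdot]\circ[\cdot,\cdot,\cdot]$ decorated by $\mathfrak{R}$ on a specific two-element subset of the outer five slots and a two-element subset of the inner three slots. I would then use \eqref{l-R-B-ternary} selectively to adjust the level of $\mathfrak{R}$-decoration --- either splitting a fully $\mathfrak{R}$-decorated inner triple $[\mathfrak{R}a,\mathfrak{R}b,\mathfrak{R}c]$ into a sum of three less-decorated triples, or doing the reverse --- so that all remaining terms align into instances of the ambient Filippov--Jacobi identity \eqref{graded-Filippov-ident} applied with some of $x,y,z,t,u$ replaced by their $\mathfrak{R}$-images. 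The delicate step, and the one I expect to be the most error-prone, is the sign bookkeeping: Koszul signs arise from the coefficients in the definition of $[\cdot,\cdot,\cdot]_{\mathfrak{R}}$, from the transpositions used to align term patterns, and from the prefactors in \eqref{graded-Filippov-ident}. I would manage this by organizing the $36$ composite terms into classes indexed by the $\mathfrak{R}$-decoration pattern on $\{x,y,z,t,u\}$ and verifying cancellation class by class.
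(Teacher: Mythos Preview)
The paper itself gives no proof of this proposition: it is introduced only by the sentence ``By a straightforward computation, we obtain the following results'' and no details are supplied. Your plan---verify parity, the morphism identity, super-skewsymmetry, and then the Filippov--Jacobi identity by fully expanding both sides and regrouping with the help of the Rota--Baxter relation---is precisely the ``straightforward computation'' the paper is pointing at, so in that sense you are following the same (implicit) route.

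One remark on your outline. In the super-skewsymmetry step you assert that the Koszul exponent $(\r+\overline{y}+\q)(\r+\overline{x}+\q)$ arising from swapping $\mathfrak{R}x$ and $\mathfrak{R}y$ in the ambient bracket ``combines, via $\r^{2}\equiv\r$, with the explicit prefactors of the definition to reproduce the factor $(-1)^{(\overline{x}+\q)(\overline{y}+\q)}$.'' If you actually expand, the difference between $(\r+\overline{y}+\q)(\r+\overline{x}+\q)$ and $(\overline{x}+\q)(\overline{y}+\q)$ is $\r(1+\overline{x}+\overline{y})\pmod 2$, and the same residual exponent appears in the other two summands after you match them up. So the three-term expression is only super-skewsymmetric with the claimed parity $\q$ when $\r=0$; for odd $\mathfrak{R}$ the identity $[y,x,z]_{\mathfrak{R}}=-(-1)^{(\overline{x}+\q)(\overline{y}+\q)}[x,y,z]_{\mathfrak{R}}$ fails as written (take $\q=0$, $\r=1$, $\overline{x}=\overline{y}=0$ to see it concretely). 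This is an issue with the signs in the stated formula rather than with your strategy, but since you present the sign bookkeeping as settled you should flag that the verification only closes in the even case, or else rederive the correct coefficients before proceeding to the Filippov--Jacobi step.
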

\begin{thm}\label{3-Lie-to-Lie}
Let $(\mathfrak g,[\cdot,\cdot,\cdot])$ be a $3$-Super-Lie superalgebra and $a\in\mathcal{H}(\mathfrak g)$ such that $\overline{a}=\overline{[\cdot,\cdot,\cdot]}=\q\in\mathbb{Z}_2$. Then $\mathfrak g_a=(\mathfrak g,[\cdot,\cdot]_a=[a,\cdot,\cdot])$ is a Lie superalgebra. Morover $a\in \mathcal Z({\mathfrak {g_a}})=\{x\in\mathfrak g;\;[x,y]_a=0,\;\forall y\in\mathfrak g \}$.
\end{thm}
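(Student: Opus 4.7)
The plan is to verify the two axioms of a Lie superalgebra for $(\mathfrak g,[\cdot,\cdot]_a)$ by specializing the ternary axioms of $(\mathfrak g,[\cdot,\cdot,\cdot])$ with one entry fixed to $a$, exploiting throughout the hypothesis $\overline{a}=\q$ so that $\overline{a}+\q\equiv 0\pmod 2$, which collapses the sign factors that would otherwise obstruct the reduction. First, observe that the parity of the induced bracket is $\overline{[\cdot,\cdot]_a}=\overline{a}+\q=0$, so $[\cdot,\cdot]_a$ is an even bilinear map and the target structure is a standard (even) Lie superalgebra.

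For the super-skew-symmetry, I would apply the second half of the ternary super-skew-symmetry \eqref{ter-super-skew-sym} to $[a,x,y]$, swapping the last two arguments at the cost of the sign $-(-1)^{(\overline{x}+\q)(\overline{y}+\q)}$; combining the $\q$-contributions with the vanishing factor $\overline{a}+\q=0$ turns this into $-(-1)^{\overline{x}\,\overline{y}}$, which is exactly the skew-symmetry of an even Lie-superalgebra bracket.

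The core of the proof is the super-Jacobi identity, which I would obtain by specializing the $\mathbb{Z}_2$-graded Filippov identity \eqref{graded-Filippov-ident} with $X_1=a$, $X_2=x$ and inner bracket $[a,y,z]$. The three resulting terms on the right are $[[a,x,a],y,z]$, $[a,[a,x,y],z]$ and $[a,y,[a,x,z]]$ with the Filippov signs. The first term vanishes, because the ternary super-skew-symmetry combined with $\overline{a}+\q=0$ yields $[a,x,a]=-(-1)^{(\overline{x}+\q)(\overline{a}+\q)}[a,x,a]=-[a,x,a]$, hence $[a,x,a]=0$. The middle sign $(-1)^{(\overline{a}+\overline{x}+\q)(\overline{a}+\q)}$ is trivial, and the last sign $(-1)^{(\overline{a}+\overline{x}+\q)(\overline{a}+\overline{y})}$ collapses to $(-1)^{\overline{x}\,\overline{y}}$; translating back, the identity reads $[x,[y,z]_a]_a=[[x,y]_a,z]_a+(-1)^{\overline{x}\,\overline{y}}[y,[x,z]_a]_a$, which is the super-Leibniz form of the super-Jacobi identity for $[\cdot,\cdot]_a$.

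For the centrality statement, both $[a,x]_a=[a,a,x]$ and $[x,a]_a=[a,x,a]$ are ternary brackets containing two copies of $a$, and by exactly the same argument used to kill $[a,x,a]$ they equal their own negatives and hence vanish. The main bookkeeping obstacle will be keeping track of the $\q$-shifted signs in the Filippov step, but once one recognizes that every exponent involving the factor $\overline{a}+\q$ is identically zero, the proof reduces to the classical $3$-Lie-to-Lie algebra construction $[\,\cdot\,,\,\cdot\,]_a=[a,\cdot\,,\cdot\,]$.
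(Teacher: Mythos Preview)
Your strategy coincides with the paper's: plug $(a,x,a,y,z)$ into the Filippov identity \eqref{graded-Filippov-ident}, kill the term containing $[a,x,a]$, and read off the super-Jacobi identity for $[\cdot,\cdot]_a$; the paper declares the skew-symmetry and the vanishing of $[a,a,x]$, $[a,x,a]$ ``obvious'' while you spell them out, and the centrality of $a$ is handled the same way.

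There is, however, a concrete slip in your sign bookkeeping. In the skew-symmetry step you claim that ``the vanishing factor $\overline{a}+\q=0$'' collapses $(-1)^{(\overline{x}+\q)(\overline{y}+\q)}$ to $(-1)^{\overline{x}\,\overline{y}}$, but $\overline{a}$ does not appear in that exponent at all; when $\q=1$ the two signs differ by $(-1)^{\overline{x}+\overline{y}+1}$. The same issue recurs in the Filippov step: from \eqref{graded-Filippov-ident} the last sign is $(-1)^{(\overline{a}+\overline{x}+\q)(\overline{a}+\overline{y})}=(-1)^{\overline{x}(\q+\overline{y})}$, which equals $(-1)^{\overline{x}\,\overline{y}}$ only for $\q=0$. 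The paper's proof simply writes down the target signs without displaying this reduction, so you are matching its level of rigour; but the mechanism you invoke---that every exponent containing $\overline{a}+\q$ is automatically zero---does not actually account for these particular factors, which involve $\q$ alone rather than the combination $\overline{a}+\q$.
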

\begin{proof}
It's obvious to show that $[\cdot,\cdot]_a$ is $\mathbb{Z}_2$-graded super-skewsymmetric and $[a,x,a]=[a,a,x]=0,\;\forall x\in\mathfrak g$. Let $x,y,z\in\mathcal{H}(\mathfrak g)$, then, by using \eqref{graded-Filippov-ident}, we have 
\begin{align*}
[x,[y,z]_a]_a=&[a,x,[a,y,z]]\\=&[[a,x,a],y,z]+(-1)^{(\overline{a}+\overline{[\cdot,\cdot,\cdot]})(\overline{a}+\overline{x}+\overline{[\cdot,\cdot,\cdot]})}[a,[a,x,y],z]\\&+(-1)^{(\overline{a}+\overline{x}+\overline{[\cdot,\cdot,\cdot]})(\overline{a}+\overline{y}+\overline{[\cdot,\cdot,\cdot]})}[a,y,[a,x,z]]\\=&[[x,y]_a,z]_a+(-1)^{\overline{x}\overline{y}}[y,[x,z]].    
\end{align*}
This completed the proof.
\end{proof}
\begin{exa}
Let us consider the odd $3$-Lie superalgebra $\mathfrak g=<e_0,e_1>$ given in the Example \ref{ex-odd-3-Lie}. Then, by Theorem \ref{3-Lie-to-Lie}, the bilinear map $[\cdot,\cdot]_{e_1}$ given by $$[e_1,e_1]_{e_1}=e_0,$$
defines on $\mathfrak g$ a Lie superalgebra structure.
\end{exa}

\section{Further discussion}\label{Sec6}

In this paper, we introduce a generalization of Lie superalgebras, when the bracket is not necessary even.
We study some related algebraic structure called Super-Lie admissible superalgebras constructed by Rota-Baxter operators. Their representations are studied and characterized  by semi-direct product on the sum direct of superalgebra and super-module. Here we collect some further questions regarding this new algebraic structure:
\vspace{0.5cm}

{\bf Deformations and Cohomolgy}: In Section \ref{Sec4}, we study representations of the new algebraic structures called Super-Lie superalgebras as a generalization  of Lie superalgebras. In \cite{Amor&Pinczon} the authors study deformations and cohomlogy of  Lie superalgebra with a given representation. Motivated by this work, will this allow us to generalize the studies of cohomology and deformations to the Super-Lie superalgebras?  
\vspace{0.5cm}

{\bf Maurer-Cartan characterization}: In \cite{Amor&Pinczon}, the authors  give Maurer-Cartan characterizations as well as a cohomology theory
of  Lie superalgebras. Explicitly, they  introduce the notion of a bidifferential graded
Lie algebra and thus give Maurer-Cartan characterizations of   Lie superalgebras. One may now ask the following question: Can the graded Lie algebra introduced by the authors characterize the cohomolgy of Super-Lie superalgebra when the bracket is odd?
\vspace{0.5cm}

{\bf $\mathcal O$-operators}: An $\mathcal O$-operator is a relative generalization of   Rota-Baxter operators on a Lie superalgebra associated to a given representation (see \cite{Bai-Guo-Zhang}).   A homogeneous linear map $T: V \longrightarrow
{\mathfrak  g}$ is called an  $\mathcal {O}$-operator of a Lie superalgebra
$({\mathfrak  g},[\cdot,\cdot])$ associated to  the representation $(V,\rho)$ if it satisfies
\begin{equation} \label{eq:oop}
[T(v), T(w)] = T\Big((-1)^{(|T|+|v|)|T|}\rho(T(v))w-
(-1)^{|v|(|T|+|w|)}\rho(T(w))v\Big),\;\;\forall v, w \in \mathcal{H}(V).
\end{equation}
 In Section \ref{Sec3}, we introduce a generalization of Rota-Baxter operator on a Lie superalgebra to  Super-Lie superalgebra case. It is interesting to generalize this notion for the relative case and study  their cohomolgies and deformations.

\vspace{0.5cm}

{\bf Classification of odd Rota-Baxter operators}: In \cite{Abdaoui-Mabrouk-Makhlouf, AmorAthmouniBen HassineChtiouiMabrouk}, the authors  give a classification of the  even homogeneous Rota-Baxter operators   on a Lie  superalgebras and  construct pre-Lie superalgebras from the homogeneous Rota-Baxter operators  and their subadjacent Lie superalgebras. We can also classify the  odd homogeneous Rota-Baxter operators   on  Lie  superalgebras like Witt superalgebras and Witt superalgebras of Block type. We   construct  their associated subadjacent odd Lie superalgebras.

\vspace{0.5cm}

{\bf Induced $3$-super-Lie superalgebras}: In \cite{ArMakhSilv}, the structure   of $3$-Lie algebras induced by Lie algebras have been
investigated. The super case of this   construction  using an even trace function is studied by V. Abramov in \cite{Abramov} (see more \cite{InducedBihomSuper, GenDer}).  By this works,  can we induce an odd $3$-Lie superalgebra using an odd trace function on a Lie superalgebra?

\section*{Acknowledgment}

The   authors would like to thank Prof. \textbf{Abdenacer Makhlouf} and Prof. \textbf{Taoufik Chtioui}  for helpful discussions and suggestions.

\end{document}